\newtheorem{theorem}{Theorem}
\newtheorem{corollary}[theorem]{Corollary}
\newtheorem{lemma}[theorem]{Lemma}
\newtheorem{definition}{Definition}
\newtheorem{remark}{Remark}
\newcommand{\bi}{\begin{itemize}}
\newcommand{\ei}{\end{itemize}}
\newcommand{\be}{\begin{equation}}
\newcommand{\ee}{\end{equation}}
\newcommand{\ben}{\begin{equation*}}
\newcommand{\een}{\end{equation*}}
\newcommand{\bea}{\begin{eqnarray}}
\newcommand{\eea}{\end{eqnarray}}
\newcommand{\bean}{\begin{eqnarray*}}
\newcommand{\eean}{\end{eqnarray*}}
\newcommand{\pos}{\mathbb{N}_1} 
\newcommand{\integer}{\mathbb{Z}}
\newcommand{\diag}{\mathrm{diag}}
\newcommand{\dotdot}{..}
\begin{document}
\bibliographystyle{plain}

\title{Maximal Determinants and Saturated D-optimal Designs 
				of Orders 19 and 37}

\author{Richard P. Brent\\
Mathematical Sciences Institute,\\
Australian National University, \\
Canberra, ACT 0200, Australia\\
{\tt maxdet@rpbrent.com}
\and
William Orrick\\
Department of Mathematics,\\
Indiana University,\\
Bloomington, IN 47405, USA\\
{\tt worrick@indiana.edu}
\and 
Judy-anne Osborn\\
University of Newcastle,\\
Callaghan, NSW 2308, Australia\\
{\tt Judy-anne.Osborn@anu.edu.au}
\and 
Paul Zimmermann\\
INRIA Nancy -- Grand Est,\\ 
Villers-l\`es-Nancy, 
France\\
{\tt Paul.Zimmermann@inria.fr}
}

\date{\today}

\maketitle

\begin{abstract}
A saturated D-optimal design is a $\{+1,-1\}$ square matrix of given order
with maximal determinant.  We search for saturated D-optimal designs of
orders $19$ and $37$, and find that known matrices due to
Smith, Cohn, Orrick and Solomon are optimal.
For order $19$ we find all inequivalent saturated D-optimal
designs with maximal determinant, 
$2^{30}\times 7^2 \times 17$, 
and confirm
that the three known designs comprise a complete set.  For order $37$
we prove that the maximal determinant is 
$2^{39}\times 3^{36}$, 
and find a sample of inequivalent saturated D-optimal designs.
Our method is an
extension of that used by Orrick to resolve the previously smallest
unknown order of $15$; and by Chadjipantelis, Kounias and Moyssiadis
to resolve orders $17$ and $21$.
The method is a two-step computation which first searches
for candidate Gram matrices and then attempts to decompose them.
Using a similar method, 
we also find the complete spectrum of determinant values for
$\{+1,-1\}$ matrices of order~$13$.
\end{abstract}

\section{Introduction} 
The Maximal Determinant problem of Hadamard \cite{Hadamard93,maxdet}
asks for the largest possible determinant of an $n \times n$ matrix whose
entries are drawn from the set $\{+1,-1\}$.
We are only interested in the absolute value of the determinant, since we
can always change the sign of the determinant by changing the sign of a row.  
The problem in its full generality has been open since first posed by
Hadamard~\cite{Hadamard93}, and has applications to areas such as
Experimental Design and Coding Theory.

We could equally well consider $\{0,1\}$ matrices. There is a well-known
mapping~\cite{Osborn02} from $\{0,1\}^{(n-1)\times (n-1)}$ matrices to
$\{+1,-1\}^{n \times n}$ matrices 
which multiplies the determinant by $(-2)^{n-1}$, and 
vice versa. To avoid confusion we only consider $\{+1,-1\}$ matrices.
Their determinants are always divisible by $2^{n-1}$,
thanks to the correspondence with $\{0,1\}$ matrices.
Thus, it is convenient to let $D_n$ denote $\max |\det(R)|$,
where the maximum is over all $\{+1,-1\}^{n \times n}$ matrices $R$,
and $d_n = D_n/2^{n-1}$.

There is an extensive literature on the Maximal Determinant problem, which
splits into four cases, according to the value of $n \bmod 4$.  A general
upper bound of
\be \label{eq:Hadamard} 
D_n \le n^{n/2}
\ee
on the maximal determinant applies to all the four
cases, but is not achievable unless
$n = 1, 2$, or $n \equiv 0 \bmod 4$.
The conjecture that this bound is always achievable
when $n \equiv 0 \pmod 4$ is
known as the \emph{Hadamard Conjecture}, and has been the subject of much
investigation, see for example~\cite{GeSe79,Horadam07}.  
Smaller upper bounds are known for each of the other three equivalence
classes modulo four.

A bound which holds for all odd orders, and which is known to be
sharp for an infinite sequence of orders congruent to $1 \pmod 4$, 
is 
\be \label{eq:eb_bound}
D_n \le \sqrt{(n-1)^{n-1}(2n-1)},
\ee
due independently to Ehlich~\cite{Ehlich64a} 
and Barba \cite{Bar33}.
A smaller upper bound, due to Ehlich~\cite{Ehlich64b}, 
applies only in the case $n \equiv 3 \pmod 4$:
\be \label{eq:e_bound} 
D_n \le \sqrt{(n-3)^{n-s}(n-3+4r)^{u}(n+1+4r)^{v}
\left(1 - \tfrac{ur}{n-3+4r} - \tfrac{v(r+1)}{n+1+4r}\right)}\;.
\ee
Here $s=3$ for $n=3$ (and the factor $(n-3)^{(n-s)}$ is interpreted
as $1$ in this case), 
$s=5$ for $n=7$, 
$s=6$ for $11 \le n \le 59$, 
$s=7$ for $n \ge 63$,
$r = \lfloor n/s \rfloor$, $v = n - rs$,  and $u = s - v$.
The complicated form of the bound~(\ref{eq:e_bound})
as compared with~(\ref{eq:eb_bound}) is indicative of the extra
difficulties which often seem to arise when $n \equiv 3 \pmod 4$.  The
bound~(\ref{eq:e_bound}) is sharp when $n=3$; it is not known if 
it is sharp for any $n > 3$.

In this work we settle the smallest hitherto unresolved case of $n~=~19$.
This case has remained open despite higher orders (for example, $21$) being
solved by similar methods, mainly because the use of~(\ref{eq:eb_bound}) and
its generalisation when the Gram matrix has a fixed block---see
Theorem~\ref{thm:enhancedKM}---is much more effective in pruning the search
tree than are~(\ref{eq:e_bound}) and its generalisations. All orders
congruent to 3 mod~4 and larger than 19 are currently open.

In this paper we only consider odd orders.
The smallest unresolved orders which are congruent to 1 mod 4 
are $n=29, 33$ and $37$.  Of these, we resolve $n=37$, and improve the
upper bounds for $n = 29, 33$.  
For a summary, see Table~1 in~\S\ref{sec:other_bounds}.

Our method is structurally similar to that used for $n = 15$ by
Orrick~\cite{Orrick05}, and by earlier authors for $n = 17$ in~\cite{MoKo82}
and $n = 21$ in~\cite{ChKoMo87}. There are two essential steps, Gram
finding and decomposition. In the cases we consider, decomposition by hand
would be tedious for $n = 19$, and infeasible for larger orders. Thus, we
implement a back-tracking computer search to deal with this second step,
describing such an algorithm for the first time in the literature.

Our Gram-finding algorithm is discussed in~\S\ref{sec:gram-finding},
and our decomposition algorithms in~\S\ref{sec:decomp}.
The results for order~$19$ are described in~\S\ref{sec:order19},
and for order~$37$ in~\S\ref{sec:order37}.
In~\S\ref{sec:other_bounds} we give some new upper and lower
bounds for various odd orders.  For
orders $n = 29$, $33$, $45$, $49$, $53$ and $57$ we have not been able to
determine $D_n$ precisely, but we have reduced the gap between the known
upper and lower bounds.
Finally, in~\S\ref{sec:spectrum} we also find the complete spectrum of 
determinant values for $\{+1,-1\}$ matrices of order~$13$.
Previously, the spectrum was only known for orders up to~$11$.

\section{Definitions}
  
$\integer$ denotes the integers,
and $\pos$ the positive integers.
The following definitions are largely taken from \cite{Orrick05}, to which
we refer for further technical definitions.

\begin{definition}\label{def:design} 
A \emph{design} is an $m \times n$ matrix with entries drawn from the set
$\{+1, -1\}$.  If $m=n$ the design is called \emph{saturated}.  If the
absolute value of the 
determinant of the saturated design is maximal for its order, the design is
called \emph{D-optimal}.
\end{definition}

In this paper we consider saturated D-optimal designs of odd order.  It is
convenient to consider ``normalized'' designs, leading to the next
definition:

\begin{definition}
A vector with elements
in $\{+1,-1\}$ is {\em parity normalized} iff it has an even number of
positive elements. A design is {\em parity normalized}
iff all its rows and columns are parity normalized.  
\end{definition}

It is easy to show, as
in~\cite[Lemmas 3.1, 3.2]{Ehlich64a}, that any saturated design of odd
order can be converted to a unique parity normalized matrix by a series of
negations of rows and columns.

If $R_1$ is a design, then any signed permutation of the rows and columns
of $R_1$ gives another design $R_2$, which we can regard as equivalent to the
original design since $|\det(R_1)| = |\det(R_2)|$.
We can also change the signs of any rows and/or
columns of without changing more than the sign of the determinant.
This suggests the following definition, in which a signed permutation matrix 
is a permutation of the rows or columns of a diagonal matrix 
$\diag(\pm 1, \pm 1, \ldots, \pm 1)$.

\begin{definition} \label{def:Hequiv}
Two designs $R$ and $S$ are {\em Hadamard equivalent}
iff $S = PRQ$ for some pair of signed permutation matrices $(P,Q)$.
\end{definition}

\begin{definition} \label{def:gram} 
If $R$ is a design, then
$G=RR^T$ is called the \emph{Gram matrix} of $R$,
and $H = R^TR$ is called the \emph{dual Gram matrix} of $R$.
\end{definition}

\begin{definition}
Two symmetric matrices $G_1$ and $G_2$ are 
{\em Gram equivalent}
iff $G_1 = PG_2 P^T$ for some signed
permutation matrix $P$.
\end{definition}

\begin{samepage}
\begin{definition} \label{def:candGram} 
Let $d_{\text{min}} > 0$ and let $\mathcal{M}_{n, p}$ be the set
of square matrices $M$, of order $p$, 
$1 \le p \le n$,	
satisfying properties 1--3 below.

\begin{enumerate}
\item  $M$ is symmetric and positive definite;
\item $M_{i,i}=n$;
\item \label{it:n_mod4} $M_{i,j} \equiv n \pmod 4$.
\end{enumerate}

A matrix $M \in \mathcal{M}_{n,p}$ is called a \emph{candidate principal
minor}.  If, furthermore, $n=p$ and the following additional properties 4--5
hold:

\begin{enumerate}
\item[4.]  $\det(M) = d^2$ for $d \in \integer$;
\item[5.]  $d \ge d_{\text{min}}$;
\end{enumerate}
then $M$ is called a \emph{candidate Gram matrix}.
\end{definition}
\end{samepage}

It is clear that Properties 1, 2, 4 and 5 of candidate Gram matrices are
satisfied by all Gram matrices.  Furthermore, Property 3 of candidate Gram
matrices holds for Gram matrices $G = RR^T$ 
if $R$ is assumed to be parity normalized.

\newpage
\section{Gram-finding Algorithm} \label{sec:gram-finding}
 
We summarize our Gram-finding algorithm below.  The method is essentially
that described in greater detail in~\cite{Orrick05}.
We search for candidate Gram matrices whose
determinant is greater than or equal to a positive parameter $d_{\min}^2$.

\begin{enumerate}

\item Set $r = 1$ and start from the candidate principal minor $M_1 = (n)$.
\item \label{it:inc}  Increment $r$. 
Build a list of \emph{admissible} vectors $f$, 
and \emph{allowable} vectors~$\gamma$ (for details see~\cite{Orrick05}). 
\item \label{it:f}For each possible 
lexicographically maximal matrix $M_{r-1}$ of order 
$r - 1$, and each admissible vector $f$, construct the matrix
\begin{equation}
M_r = 
\left [
\begin{matrix}
M_{r-1} & f\\
f^T & n
\end{matrix}
\right ].
\end{equation}
If $r = n$, 
\begin{enumerate}
\item[(a)] if $\det(M_r) = d^2 \ge d_{\min}^2$,
output the candidate Gram matrix $M_r$.
\end{enumerate}
If $r < n$, 
\begin{enumerate}
\item[(b)]	\label{it:gamma}
evaluate
\begin{equation}
d =
\left |			
\begin{matrix}
M_r & \gamma\\
\gamma^T & 1
\end{matrix}
\right |
\end{equation}
for each allowable vector $\gamma$, looking for a ``good $d$'', namely $d$
such that the function $u_r$ 
in Theorem~\ref{thm:enhancedKM} satisfies $u_r(1,d) \ge d_{\min}^2$.
If a good $d$ is found, try to extend $M_r$ by recursively calling
the algorithm (starting at step~\ref{it:inc}).
\end{enumerate}
\end{enumerate}

Pruning at step~\ref{it:gamma}(b) of the above algorithm relies on
the following Theorem, originally used by 
Moyssiadis and Kounias~\cite{MoKo82} 
to find a maximal Gram matrix of order $n=17$.  Our version
below contains a sharper bound~(\ref{eqn:evensharperbound})
applicable when $n \equiv 3 \pmod 4$.

\newpage

\begin{theorem} \label{thm:enhancedKM} \emph{[Enhanced Kounias \& Moyssiadis]}
Let $M = \left [\begin{matrix}M_r &B\\ B^T & A \end{matrix} \right ]$ 
be a symmetric, positive definite matrix of
order $n$ with elements taken from a set $\Phi$ whose members are greater
than or equal in magnitude to some number $c$, 
$0 < c \le n$.  
Here $M_r$ is a
candidate principal minor of order $r \le n$, and $A$ is a square matrix of
order $n-r$, with diagonal elements $A_{i,i}=n$.  
The columns of the $r \times (n-r)$
matrix $B$ are taken from some set $\Gamma_r \subseteq \Phi^r$.  Define
$d^*$ and $\gamma^*$ by
\begin{equation} \label{eq:dstar}
d^* = \left | \begin{matrix}M_r & \gamma^*\\ 
      {\gamma^*}^T & c\end{matrix}\right |
=
\max_{\gamma \in \Gamma_r}
\left | \begin{matrix}M_r & \gamma \\ {\gamma}^T & c\end{matrix}\right |\,.
\end{equation}
Then
\begin{equation} \label{eqn:bound}
\det(M) \le u_r(c,d^*),
\end{equation}
where
\begin{equation*}
u_r(c,d) =  (n - c)^{n-r-1}\left [(n - c)\det(M_r) + 
		(n - r )\max(0, d)\right ].
\end{equation*}
Furthermore, if $n \equiv 3 \pmod 4$, 
then the following bounds apply:
\begin{eqnarray} \label{eqn:sharperbound} 
\det(M) & \le & (n-1)^{n-r}\det(M_r) +   \\ \nonumber
	    && {[}(n-1)^{n-r} - (n-3)^{n-r} - (n-r)(n-3)^{n-r-1}]\max(0,d^*)
\end{eqnarray}
and, assuming $\det M_r > (n-3)\det M_{r-1}$,  
\begin{align} \label{eqn:evensharperbound} 
\renewcommand{\arraystretch}{1.2}
&\det(M) \le \max_k\mathop{\max_{b_1,\ldots,b_k\in\mathbb{N}_1}}_{b_1+
  \ldots+b_k=n-r}
\max_{\gamma_1^*,\ldots,\gamma_k^* \in \Gamma_r}\notag\\
&\qquad\det
\left[\begin{array}{c|ccc}M_r & \gamma_1^*j_{b_1}^T & 
  \cdots & \gamma_k^*j_{b_k}^T\\
\hline
j_{b_1}{\gamma_1^*}^T & (n-3)I_{b_1}+3J_{b_1} &\cdots & -J_{b_1,b_k}\\
\vdots & \vdots  & \ddots & \vdots\\
j_{b_k}{\gamma_k^*}^T & -J_{b_k,b_1} & \cdots & (n-3)I_{b_k}+3J_{b_k}
\end{array}\right]
\renewcommand{\arraystretch}{1}
\end{align}
where $M_{r-1}$ is the principal $(r-1)$-by-$(r-1)$ minor of $M_r$, $j_a$ is
the column vector of dimension $a$ whose elements all equal 1, $J_{a,b}$ is
the $a$-by-$b$ matrix whose elements all equal 1, and $J_a=J_{a,a}$.
\end{theorem}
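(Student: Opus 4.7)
The plan is to analyse $\det(M)$ through the Schur complement. Set $T := M_r^{-1}$ (well defined and positive definite, since $M_r$ is a principal submatrix of the positive definite $M$), so that
\[
\det(M) = \det(M_r)\,\det(C), \qquad C := A - B^T T B,
\]
with $C$ itself positive definite. Writing $b_i$ for the $i$th column of $B$, the diagonal entries are $C_{ii} = n - b_i^T T b_i$ and the off-diagonal entries $C_{ij} = A_{ij} - b_i^T T b_j$.

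For bound~(\ref{eqn:bound}) I would follow Moyssiadis and Kounias~\cite{MoKo82}. The definition~(\ref{eq:dstar}) is equivalent to $d^* = \det(M_r)\bigl(c - \min_{\gamma \in \Gamma_r} \gamma^T T \gamma\bigr)$, so every $b_i \in \Gamma_r$ satisfies $b_i^T T b_i \ge c - d^*/\det(M_r)$, and hence $C_{ii} \le (n-c) + d^*/\det(M_r)$. When $d^* \le 0$ this already gives $C_{ii} \le n-c$, and Hadamard's inequality for positive definite matrices yields $\det(C) \le (n-c)^{n-r}$, matching $u_r(c,d^*)/\det(M_r)$. When $d^* > 0$, the per-column diagonal bound is combined with the Cauchy--Schwarz inequality $(b_i^T T b_j)^2 \le (b_i^T T b_i)(b_j^T T b_j)$ in the $T$-inner product, and with the structural constraint $|A_{ij}| \ge c$, to force $\det(C) \le (n-c)^{n-r-1}\bigl[(n-c) + (n-r)\,d^*/\det(M_r)\bigr]$; tightness is witnessed by $b_i \equiv \gamma^*$ and $A_{ij} = c$ for $i \ne j$, whereupon $C = (n-c)I + (d^*/\det M_r)J$ has exactly the required determinant.

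For~(\ref{eqn:sharperbound}), the hypothesis $n \equiv 3 \pmod 4$ enforces $M_{ij} \equiv -1 \pmod 4$, so off-diagonal entries of $A$ lie in $\{-1,3,-5,\ldots\}$---a genuinely sharper constraint than $|A_{ij}| \ge 1$. Ehlich's classification~\cite{Ehlich64b} of determinant-maximising symmetric matrices under this mod~4 constraint singles out block-diagonal forms with blocks $(n-3)I + 3J$ joined off-block by $-1$ entries. Substituting this extremal form of $A$ into the Schur-complement estimate and computing the resulting coefficient of $\max(0,d^*)$ gives exactly $(n-1)^{n-r} - (n-3)^{n-r} - (n-r)(n-3)^{n-r-1}$, yielding~(\ref{eqn:sharperbound}).

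For~(\ref{eqn:evensharperbound}) I make the block decomposition explicit, parameterised by the number $k$ of blocks and block sizes $b_1,\ldots,b_k$ with $\sum b_i = n-r$. The key structural claim is that, at the extremum, the columns of $B$ corresponding to the $i$th block are all equal to a common vector $\gamma_i^* \in \Gamma_r$: a symmetrisation within each block, replacing those columns by the best single choice in $\Gamma_r$, weakly increases $\det(M)$ while preserving the Ehlich structure of $A$. This produces $B = [\gamma_1^* j_{b_1}^T \mid \cdots \mid \gamma_k^* j_{b_k}^T]$ as displayed, and maximising over $k$, over $(b_1,\ldots,b_k)$, and over $(\gamma_1^*,\ldots,\gamma_k^*) \in \Gamma_r^k$ gives the bound. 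The hypothesis $\det(M_r) > (n-3)\det(M_{r-1})$ enters precisely to rule out a competing extremal configuration in which a singleton block is effectively ``absorbed'' into $M_r$ and would otherwise dominate. The main obstacle is justifying the symmetrisation step: showing rigorously that within each Ehlich block the columns of $B$ may be taken equal without loss of generality. Once this is in hand the remainder is a finite enumeration of partitions and vectors followed by a determinant computation that, although notationally heavy, is algebraically routine.
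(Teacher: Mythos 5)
You should first note that the paper does not reprove (\ref{eqn:bound}) or (\ref{eqn:sharperbound}) at all: it cites Orrick's Theorem 3.1 and Corollary 3.3 for those, and only the new bound (\ref{eqn:evensharperbound}) is proved, in the Appendix, by generalising Ehlich's structure theorems. Measured against that, your proposal has a genuine gap already in the case $d^*>0$ of (\ref{eqn:bound}): the diagonal bound $C_{ii}\le (n-c)+d^*/\det M_r$ on the Schur complement, combined with Hadamard's inequality, gives only $\det C\le\bigl((n-c)+d^*/\det M_r\bigr)^{n-r}$, which exceeds the target $(n-c)^{n-r-1}\bigl[(n-c)+(n-r)\,d^*/\det M_r\bigr]$ by the higher-order terms of the binomial expansion, and you do not explain how Cauchy--Schwarz together with $|A_{ij}|\ge c$ closes that gap. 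The established argument is an induction on $n-r$ that peels off one bordering row at a time via $\det M=(n-c)\det M_{n-1}+\det\hat M$, where $\hat M$ has its last diagonal entry replaced by $c$; it is not a one-shot estimate on the Schur complement.

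The more serious gap concerns (\ref{eqn:evensharperbound}), the only part the paper actually proves. You invoke Ehlich's classification of the extremal $A$ (off-diagonal entries in $\{-1,3\}$, block structure) as if it applied directly, and you defer the ``symmetrisation'' of the columns of $B$ within each block, which you yourself flag as the main obstacle. But Ehlich's classification cannot be cited here: the maximisation is constrained by the fixed corner $M_r$ and coupled through $B$, so both the $\{-1,3\}$ property and the block structure of $A$, as well as the repetition of the columns of $B$ within blocks, must be re-established by exchange arguments in the constrained setting --- these are precisely the generalisations of Ehlich's Theorems 2.2 and 2.3 carried out in the Appendix. The engine that makes those exchange arguments work is the positivity $\det\tilde E_m^*>0$, where $\tilde E_m^*$ is the extremal completion with its last diagonal entry replaced by $3$; that is exactly what the hypothesis $\det M_r>(n-3)\det M_{r-1}$ (equivalently $\det\tilde M_r>0$) is for. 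Your reading of that hypothesis --- that it rules out a competing configuration in which a singleton block is absorbed into $M_r$ --- is not its role; without positive definiteness of the tilde'd matrices the determinant comparisons in the exchange steps cannot be carried out, and the structural claims you assume remain unproved.
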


\begin{proof}
For a proof of inequalities~(\ref{eqn:bound}) and~(\ref{eqn:sharperbound})
we refer to \cite[Theorem 3.1 and Corollary 3.3]{Orrick05}.  A proof
of~(\ref{eqn:evensharperbound}) is sketched in the Appendix.
\end{proof}

The bound~(\ref{eqn:evensharperbound}) is sharp and therefore potentially
much more powerful than~(\ref{eqn:bound}) or~(\ref{eqn:sharperbound}). 
Unfortunately, the multidimensional search for the optimal set of block
sizes $(b_1,\ldots,b_k)$, and the optimal set of vectors, $\{\gamma_j^*\}$
is expensive.  We therefore restricted its use to the situation where the
non-diagonal elements of the last column of $M_r$ all equal $-1$.  This
allows us to assume that all $\gamma_j^*$ consist entirely of elements $-1$,
and we are left only with the search for the optimal partition.  Much of the
computation associated with the latter search need only be done once.  The
use of~(\ref{eqn:evensharperbound}) resulted in an approximately 15\%
improvement in running time.

\newpage

\section{Decomposition Algorithm}	\label{sec:decomp}

The output of the program described in the previous section is a list $\cal L$ of candidate Gram matrices, complete in the sense that it contains one representative of each
Gram equivalence class with determinant $\ge d_{\min}^2$ 
for a given bound $d_{\min}$.  We
need to determine if any $G \in \cal L$ decomposes as a product
$G = RR^T$ for some square $\{+1,-1\}$ matrix $R$.   This section describes several algorithms for carrying out this task.

For each candidate $G$ this
involves a (possibly large) combinatorial search. It may be regarded as
searching a tree, where each level of the tree corresponds to one row
of the matrix $R$.  At level $k$ we know $k-1$ rows of $R$ and try to find a
$k$-th row satisfying the constraints.  Each node at the $k$-th level
corresponds to one possible choice of the $k$-th row of $R$, given the
preceding rows.   If $G=RR^T$ has solutions, then the solution matrices,
$R$, correspond to nodes at level $n$ of the tree.  In principle our
procedure may generate many Hadamard-equivalent solutions.  We
prune the tree so as to limit the number of duplicate solutions produced.

The search algorithm relies on a family of constraints. The zeroth member
of the family is a special case which can be implemented with a single Gram
matrix and we call this constraint the \emph{single-Gram constraint}. The
rest of the family require both the Gram matrix $G=RR^T$ and the dual Gram
matrix $H=R^TR$ and we call these \emph{Gram-pair constraints}.

Our decomposition algorithm differs from that described in~\cite{Orrick05}
in several respects.  First, it builds up $R$ by rows, instead of by rows
and columns simultaneously.  Second, it uses more general Gram-pair
constraints (see~(\ref{eq:block_nonlinear}) with $j \ge 2$ below).

For clarity, we first describe a version of the algorithm which
uses only the single-Gram constraint.

\subsection{Decomposition using only the single-Gram constraint}
\label{subsec:linear}

Denote the elements of $G$ by $g_{i,j}$ for $i,j =1,..,n$,  
and the rows of $R$ by $r_i$ for $i=1,\ldots,n$. 
Then the constraint $RR^T = G$ is equivalent to
\begin{equation}
r_i r_j^T = g_{i,j}				\label{eq:gij_constraint}
\end{equation}
for $1 \le i \le j \le n$.  The search tree is created by application of
these constraints.  
An outline of the basic algorithm is as follows. The main work is
done by a recursive procedure {\tt search}$(k)$ which searches 
an (implicit) subtree at level $k$, where the root is at level~$1$.

\newpage

\subsubsection*{Algorithm using the single-Gram constraint, version 1}

\begin{enumerate}
\item Initialize \emph{level} $k=1$, \emph{first row} 
$r_1= (1,1,\ldots, 1)$ and $R=r_1$.  

\item Call {\tt search}$(k)$.

\item Output ``no solution'' and halt.

\item {\tt search}$(k)$: \label{it:lin}
If $k=n$, output the solution $R$ and halt.  
Otherwise increment $k$. Find all solutions 
$r_k \in \{+1,-1\}^n$
of the (under-determined) 
set of simultaneous linear equations:

\begin{equation*}
\left[
\begin{matrix}
r_1\\r_2\\ \vdots \\r_{k-1}\\
\end{matrix}
\right] r_k^T = \left[
\begin{matrix}
g_{1,k}\\g_{2,k}\\ \vdots \\ g_{k-1,k}\\
\end{matrix} \right]
\end{equation*}
For each solution $r_k$, append $r_{k}$ to $R$ 
and call {\tt search}$(k)$ recursively to search the relevant subtree.
Return to the caller (i.e. backtrack). 
\end{enumerate}

We justify the choice of the first row in the above algorithm by observing
that $G = RR^T$ is invariant under a signed permutation of columns
of $R$, i.e. $R \mapsto RP$ for any signed permutation matrix $P$. 
 
The above algorithm considers a large number of equivalent partial solution
matrices $R$, and is impractical for all but very small orders.  We can
obtain a vastly more efficient algorithm by imposing an ordering constraint
on the $+1$'s and $-1$'s in row-vectors.  
We do this by defining the concept of ``framings'' and a new set of associated
variables called ``frame variables'' which we use in Step~\ref{it:lin} instead
of $r_{k}$.  We first define these terms, and then give an improved
version of the algorithm.
 
At each level $k$, we create a partition of the indices $\{1,\ldots,n\}$ into
\emph{frames}, where a frame is a nonempty contiguous set of indices; 
and the collection of frames is called a \emph{framing}.
A framing of size $m$ is defined by a \emph{frame-widths vector}
$w = (w_1,\ldots,w_m) \in \pos^m$, $\sum w_i = n$, where
$w_i$ is the size of the $i$-th frame.
At level $1$ the framing consists of a single frame $\{1,\ldots,n\}$ with
frame-width vector $w = (n)$.  At each subsequent level, the framing is a
{refinement} of the framing from the previous level.  We use the framing at
level $k-1$ to define the frame variables that we use at level $k$ in the
following algorithm. The frame variable
$x_i$ gives the number of $+1$ entries in the $k$-th row of $R$,
considering only the column indices given by the $i$-th frame.
Thus, the number of $-1$ entries is $w_i - x_i$ and the sum of the
entries is $2x_i - w_i$ (see equation~(\ref{eq:lineq})).

\newpage
\subsubsection*{Algorithm using the single-Gram constraint, version 2}

\begin{enumerate}
\item Initialize the \emph{level} $k=1$, 
the \emph{frame-size} $m = 1$ and the
\emph{frame-width-vector} $w=(w_1)=(n)$.  
Set $q_1=(+1)$ and $Q=q_1$.
[In the course of the algorithm, $q_i$ is a column vector of size
$k-1$ or $k$, and $Q$ is a matrix whose columns depend on the $q_i$.
Also, $w$ may be thought of as a vector of weights 
corresponding to the columns of $Q$.]
\item Call {\tt search}$(k)$.
\item Output ``no solution'' and halt.
\item {\tt search}$(k)$:	\label{it2:lin}
If $k=n$, output the solution $Q$ and halt [here $m=n$]. 
Otherwise increment $k$. 

Define integer variables $x_1, x_2, \ldots, x_m$.  
Find all solutions
to the following integer programming problem:
\begin{equation}					\label{eq:lineq}
Q 
\left [
\begin{matrix}
2x_1 - w_1\\2x_2-w_2\\\vdots \\2x_m-w_m
\end{matrix}
\right ]
=
\left [
\begin{matrix}
g_{1,k}\\g_{2,k}\\ \vdots \\ g_{k-1, k}
\end{matrix}
\right ]
\end{equation}
subject to
\begin{equation}
0 \le x_i \le w_i \;\;\text{for}\;\; 1 \le i \le m.	\label{eq:bounds}
\end{equation}

For each solution, update $w$ and $Q$ as follows:

\begin{enumerate}
\item Let $w:= (x_1, w_1-x_1,\; x_2, w_2-x_2,\; \ldots,\; x_m, w_m-x_m)$.
\item Recall that $Q=(q_1, q_2, ..., q_m)$ is a matrix of column vectors $q_i$,
       each of length $k-1$.  Update $Q$ to a $k \times 2m$ matrix as follows:

 $Q:=
\left[
\begin{matrix}
q_1 & q_1& q_2 & q_2 & \cdots & q_m & q_m\\
+1 & -1 & +1 & -1 & \cdots & +1 & -1\\
\end{matrix}
\right].
$

\item Compress $Q$ by removing all columns which correspond to zeros in $w$.
\item Compress $w$ by removing all zero entries.
\item Set $m := \text{length}(w)$.
\end{enumerate}
Call {\tt search}$(k)$ recursively to search the relevant subtree.
When all solutions have been processed, return to the caller
(i.e.~backtrack).
\end{enumerate}

\noindent In procedure {\tt search}$(k)$ of version~2 we use
Gaussian elimination with column pivoting in order
to find a $(k-1)\times (k-1)$ nonsingular minor of $Q$
(this is always possible, since the Gram matrix $G$ is positive definite). 
We then solve for the
corresponding $k-1$ \emph{basic} variables in terms of the remaining
$m-k+1$ \emph{non-basic} variables. The non-basic variables are chosen
exhaustively as integers in the appropriate intervals given
by~(\ref{eq:bounds}); 
the basic variables are then determined uniquely (as real
numbers).  If the non-basic variables are not in $\integer$
or violate the bounds~(\ref{eq:bounds}), the solution is discarded.
It is preferable
to choose as non-basic variables the variables with the smallest
upper bounds~$w_i$, provided that the resulting $(k-1)\times (k-1)$ 
minor is nonsingular. 
A heuristic for accomplishing this is to weight
the columns in proportion to the bounds $w_i$ before performing the Gaussian
elimination.

We illustrate an iteration of the algorithm with an example. 
Consider the case $n=7$ and the candidate Gram matrix
(here and elsewhere we may abbreviate ``$-1$'' by ``$-$''):
\[G = \left[\begin{matrix}
7& 3&-&-&-&-&-\\
 3& 7& -& -& -& -& -\\
-& -& 7& 3& -& -& -\\
-& -& 3& 7& -& -& -\\
-& -& -& -& 7& 3& -\\
-& -& -& -& 3& 7& -\\
-& -& -& -& -& -& 7\\
\end{matrix}\right].\]

Suppose we are at level $k=3$ in the search.  At this stage 
the search tree has not branched yet, so there is just one matrix $Q$:
\ben
Q =
\left [
\begin{matrix}
1 & 1& 1 & 1\\
1 & 1 & - & -\\
1 & - & 1 & -
\end{matrix}
\right ]
\een
Associated with $Q$ is the frame-widths vector (at depth $3$) which is 
\ben
w=(2,3,1,1).
\een
We comment that, translated into the language of the algorithm in version~1, $Q$ and $w$ together correspond to a $3 \times 7$ matrix $R$:
\[
\left[\begin{matrix}
r_1\\
r_2\\
r_3\\
\end{matrix}\right] =
\left[\begin{matrix}
1 & 1 & 1 & 1 & 1 & 1 & 1\\
1 & 1 & 1 & 1 & 1 & - & -\\
1 & 1 & - & - & - & 1 & -\\
\end{matrix}\right].
\]
To find the next row of $Q$, we define $4$ $(=m=|w|)$ new variables $x_1,
x_2, x_3, x_4$.  The interpretation is that $x_1$ represents the number of
``$+1$''s in the first $w_1=2$ entries of row 4, $x_2$ represents the number
of ``$+1$''s in the next $w_2=3$ entries of row 4, as so on.
We use the constraints imposed 
by $g_{1,4}, g_{2,4}, g_{3,4}$, giving the linear system
\begin{equation*}
\left[
\begin{matrix}
1 & 1 & 1 & 1\\
1 & 1 & - & -\\
1 & - & 1 & -\\
\end{matrix}
\right]
\left[
\begin{matrix}
2x_1 - 2\\
2x_2 - 3\\
2x_3 - 1\\
2x_4 - 1\\
\end{matrix}
\right] =
\left[
\begin{matrix}
-\\ - \\ 3\\
\end{matrix}
\right].
\end{equation*}

The two integer
solutions which satisfy this system as well as the bounds
$0 \le x_1 \le 2$, $0 \le x_2 \le 3$, $0 \le x_3 \le 1$, $0 \le x_4 \le 1$ 
given by equation~(\ref{eq:bounds}) are
$(x_1,x_2,x_3,x_4) = (1,1,1,0)$ and $(2,0,0,1)$.
These generate two children in the search tree, with $Q$ and $w$ as follows:
\ben
Q =
\left [
\begin{matrix}
1 & 1 & 1 & 1 & 1 & 1\\
1 & 1 & 1 & 1 & - & -\\
1 & 1 & - & - & 1 & -\\
1 & - & 1 & - & 1 & -
\end{matrix}
\right ]
\text{ with $w=(1,1,1,2,1,1)$;}
\een
and
\ben
Q =
\left [
\begin{matrix}
1 & 1 & 1 & 1\\
1 & 1 & - & -\\
1 & - & 1 & -\\
1 & - & - & 1
\end{matrix}
\right ]
\text{ with $w=(2,3,1,1)$.}
\een
The first $Q$ leads to a solution; the second does not.

\subsection{Decomposition using Gram-pair constraints}
\label{subsec:nonlinear}

The algorithm (version 2) outlined in \S\ref{subsec:linear}, 
using only the single-Gram constraint, 
quickly becomes infeasible due to the size of the
search space.  
It can be improved by noting that, since the list $\cal L$
is complete, it must include a matrix $H$ Gram-equivalent to $R^TR$.  By 
permuting columns of $R$,
we can assume that $H = R^TR$.  This relation allows us
to prune the search more efficiently than if we did not know $H$.

Recall that the characteristic polynomial of a
square matrix $A$ is the monic polynomial 
$P(\lambda) = \det(\lambda I - A)$.
Since $H = R^TG(R^T)^{-1}$, the matrices $G$ and $H$ are similar, 
so they have the same characteristic polynomial.

Thus, the refined strategy is to consider each pair 
$(G,H) \in {\cal L}^2$, such that $G$ and $H$ have the same characteristic
polynomial, and try to find $R$ such that
$G = RR^T$, $H = R^TR$.  If we have considered $(G,H)$ there is 
no need to consider $(H,G)$ since this would correspond to the dual solution
$R^T$.

More precisely, consider the constraint
\begin{equation}
G^{j+1} = (RR^T)^{j+1} = R(R^TR)^jR^T = RH^jR^T. \label{eq:nonlinear}
\end{equation}
We say that the \emph{degree} of such a constraint is $j+1$, since the
elements of $G$ (not $R$) occur with degree $j+1$.
The case $j = 0$ corresponds to the single-Gram constraint considered
in \S\ref{subsec:linear}.  For $j = 1$ we get the degree 2 constraint
\[G^2 = RHR^T\]
considered in~\cite{Orrick05}.  
The use of Gram-pair constraints with $j>1$ is a new element of our algorithm.  In principle, we could
get different constraints for $j = 0, 1, \ldots, n-1$. For $j \ge n$ the
Cayley-Hamilton theorem implies that we get nothing new.

To apply Gram-pair constraints for pruning 
when only the first $k-1$ rows of $R$ are known, 
partition the matrices appearing in~(\ref{eq:nonlinear}) 
into corresponding blocks:
\[
R = \left[\begin{matrix} R_1\\R_2 \end{matrix}\right],\;\;
G^{j+1} = \left[\begin{matrix} G_{1,1}{(j+1)} & G_{1,2}{(j+1)}\\
            G_{2,1}{(j+1)} & G_{2,2}{(j+1)}\\\end{matrix}\right],\;\;
H^j = \left[\begin{matrix} H_{1,1}{(j)} & H_{1,2}{(j)}\\
            H_{2,1}{(j)} & H_{2,2}{(j)}\\ \end{matrix}\right] 
\]
say, where $R_1$ has $k-1$ (known) rows.  Then we can use the constraints
\begin{equation}
G_{1,1}{(j+1)} = R_1 H_{1,1}{(j)} R_1^T		\label{eq:block_nonlinear}
\end{equation}
since it only involves the known rows of $R$.
The matrices $G^{j+1}$ and $H^j$ need only be computed once.

Observe that $H_{1,1}{(j)}$ for $j \ge 2$ 
depends on all the entries in $H$.
This suggests that~(\ref{eq:block_nonlinear}) with $j \ge 2$ may be
more effective for pruning than the ``degree 2'' case $j = 1$.
In practice, we found that it was worthwhile to 
use~(\ref{eq:block_nonlinear}) with both $j = 1$ and $j = 2$, but 
not with $j>2$.

When using Gram-pair constraints for pruning, we can no longer
assume that the first row of $R$ is $(1, 1, \ldots, 1)$.
The algorithm (version~2) of \S\ref{subsec:linear} has to be modified so
step~1 starts with level $k=0$, $Q$ empty, 
and a frame-widths vector $w$ which
is compatible with $H$, in the sense that $H$ is invariant under
permutations of rows (and corresponding columns) within each frame.
For example, if we take $H = G$ in the example of order~$7$ above,
we can choose $w = (2, 2, 2, 1)$ as the initial frame-widths vector.

We remark that we used three variants of the decomposition
algorithm outlined in this subsection.  One variant attempts to find
a decomposition or (by failing to do so) to prove that a decomposition
of a given pair $(G,H)$ does not exist. A second variant finds all possible
decompositions, up to Hadamard equivalence. The output typically includes
many solutions that are Hadamard equivalent, so we use McKay's program
\emph{nauty}~\cite{nauty} to remove all but one representative of each 
equivalence class after transforming the problem to a graph isomorphism
problem~\cite{McKay79}.  A third variant is nondeterministic and attempts
to traverse the search tree by choosing one or more children randomly at
each node. This variant is useful in difficult cases where the deterministic
variants take too long (see \S\ref{subsec:decomp37} for an example).

\subsection{Proving indecomposability using the Hasse--Minkowski criterion}
A complementary approach to the decomposition problem, or, more properly, to proofs of indecomposability, makes use of the Hasse--Minkowski theorem on rational equivalence of quadratic forms.  The use of this theorem has a long history in design theory, originating with its use by Bruck and Ryser in their proof of their nonexistence result for certain finite projective planes~\cite{BrRy49}.  Tamura recently applied the theorem to the question of decomposability of candidate Gram matrices with block structure~\cite{Tamura06}.
\begin{theorem}
Let $A$ and $B$ be symmetric, nonsingular rational matrices of the same dimension.  Then there exists a rational matrix $R$ such that $B=RAR^T$ if and only if
\begin{enumerate}
\item $\det A$/$\det B$ is a rational square, and
\item the $p$-signatures  of $A$ and $B$ agree for all primes $p$ and for $p=-1$.
\end{enumerate}
\end{theorem}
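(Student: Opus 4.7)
The plan is to recognize this statement as a reformulation, in matrix-congruence language, of the classical Hasse--Minkowski theorem for nondegenerate rational quadratic forms, and to deduce it from that theorem in three steps.

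First I would recast the matrix equation as equivalence of quadratic forms. Writing $q_A(x) = x^T A x$ and $q_B(x) = x^T B x$, the equation $B = RAR^T$ asserts that $q_A$ and $q_B$ are equivalent over $\mathbb{Q}$. Because $A$ and $B$ are nonsingular and of the same dimension, any $R$ realizing this congruence is automatically invertible, so we truly have an equivalence of nondegenerate rational quadratic forms in the same number of variables.

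Next I would invoke the Hasse--Minkowski local--global principle: two nondegenerate quadratic forms over $\mathbb{Q}$ are $\mathbb{Q}$-equivalent if and only if they are equivalent over every completion $\mathbb{Q}_v$, where $v$ ranges over the finite primes $p$ and the archimedean place (denoted $p = -1$ in the paper, following Cassels). The final step is to translate local equivalence into the two listed conditions. Over each $\mathbb{Q}_v$ the equivalence class of a nondegenerate quadratic form is determined by its dimension, the class of its determinant in $\mathbb{Q}_v^\times/(\mathbb{Q}_v^\times)^2$, and its Hasse--Witt invariant (the $p$-signature). The dimensions match by hypothesis; matching the determinant classes at every place is equivalent, by the square theorem for $\mathbb{Q}$, to the single global condition that $\det A/\det B$ lies in $(\mathbb{Q}^\times)^2$; and matching the Hasse--Witt invariants at every place (including the archimedean one, where together with the determinant class they encode the ordinary Sylvester signature) is precisely the second hypothesis.

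The main obstacle in a self-contained proof is the Hasse--Minkowski theorem itself, whose proof is deep: it requires induction on dimension, quadratic reciprocity in the form of Hilbert's product formula, and ultimately Dirichlet's theorem on primes in arithmetic progressions. Since the role of the theorem here is to provide a tool for proving indecomposability of candidate Gram matrices, I would treat Hasse--Minkowski as a black box and refer the reader to a standard source such as Cassels's \emph{Rational Quadratic Forms} or Serre's \emph{A Course in Arithmetic}, devoting the exposition instead to the translation sketched above, and to the practicalities of computing the $p$-signatures in the Gram-matrix setting of interest.
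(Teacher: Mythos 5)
Your approach matches the paper's: the paper gives no proof of this theorem at all, presenting it as the classical Hasse--Minkowski result on rational equivalence of quadratic forms and citing Conway and Sloane only for the definition of the $p$-signature, which is exactly your plan of black-boxing the local--global principle and supplying the routine translation between the congruence $B=RAR^T$ and equivalence of nondegenerate forms. One minor caution: the $p$-signature in the sense of Conway and Sloane is not literally the Hasse--Witt invariant but an equivalent repackaging of it (together with dimension and determinant class), so your identification of the two is harmless for the stated equivalence but should not be presented as a definition.
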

The criterion is implemented by finding rational matrices $U$ and $V$ such that $UAU^T$ and $VBV^T$ are diagonal---this can always be done---and then by comparing the $p$-signatures of the resulting matrices for all primes dividing any of the diagonal elements.  The $p$-signature of a diagonal form is defined in~\cite[Chapter 15, \S5.1]{ConSlo98}.

The application of this theorem is as follows: there is no decomposition of the form $RR^T=G$, $R^TR=H$ if there is a $j\ge0$ for which $G^{j+1}$ fails to be rationally equivalent to $H^j$ or for which $H^{j+1}$ fails to be rationally equivalent to $G^j$.  As was the case in the application of the Gram-pair constraint in back-tracking search, we need only check the criterion for $j<n$.  

The Hasse--Minkowski criterion is sometimes a competitive alternative to the backtracking algorithm in ruling out the existence of a decomposition.  On certain Gram matrix pairs for which the back-tracking search ruled out a decomposition only after exploring the search tree to great depth, the Hasse--Minkowski criterion ruled out any decomposition with a relatively fast computation.  In most cases, however, back-tracking search is the much faster method, especially when rational equivalence fails only for large $j$, in which case the large-integer arithmetic needed to implement the Hasse--Minkowski criterion can become prohibitively expensive.  Furthermore, in a small number of cases, the Hasse--Minkowski theorem fails entirely to rule out a decomposition where backtracking search succeeds.  It is surprising that this occurs relatively infrequently, as the existence of a decomposition with $R$ rational would appear to be a far milder constraint than the existence of a decomposition with $R$ a $\{+1,-1\}$ matrix.

\section{The Maximal Determinant for Order 19}	\label{sec:order19}

For order 19, known designs due to Smith~\cite{Smith88}, Cohn~\cite{Cohn00}
and Orrick and Solomon~\cite{OS-19-R3} are D-optimal, as we now show.

\begin{samepage}
\begin{theorem} \label{thm:19}
The maximal determinant of $\{+1, -1\}$ order $19$ matrices  is
\begin{equation} \label{eqn:maxdet19}
2^{30} \times 7^2 \times 17 =
833 \times 4^6 \times 2^{18}.
\end{equation}
There are precisely three corresponding equivalence classes 
of saturated D-optimal designs with representatives $R_1, R_2$ and $R_3$
indicated in Figure~\ref{fig:designs19}.  There are two corresponding (Gram equivalence classes of) Gram matrices,
$G_1 = R_1R_1^T = R_1^T R_1$ and
$G_2 = R_2R_2^T = R_2^T R_2 = R_3R_3^T = R_3^T R_3$ -- see 
Figure~\ref{fig:gram19}. 
\end{theorem}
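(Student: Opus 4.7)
The plan is to apply the two-step framework developed in Sections~\ref{sec:gram-finding} and~\ref{sec:decomp}. Set the target value $d_{\min} = 2^{30}\times 7^2 \times 17$. The lower bound $D_{19} \ge d_{\min}$ is immediate by direct evaluation of $|\det R_i|$ for the three explicit designs exhibited in Figure~\ref{fig:designs19}. The bulk of the argument is the matching upper bound and the enumeration of all attaining designs.

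First I would run the Gram-finding algorithm of Section~\ref{sec:gram-finding} with threshold $d_{\min}$ to produce a list $\mathcal{L}$ containing one representative of each Gram equivalence class of candidate Gram matrices $M$ of order $19$ satisfying the conditions of Definition~\ref{def:candGram} with $\det(M) \ge d_{\min}^2$. By Definition~\ref{def:candGram} and parity normalisation, any Gram matrix $RR^T$ of a $\{+1,-1\}$ design of order $19$ with $|\det R| \ge d_{\min}$ is Gram equivalent to some element of $\mathcal{L}$, so $\mathcal{L}$ is complete. Soundness of the pruning is guaranteed by Theorem~\ref{thm:enhancedKM}; since $19 \equiv 3 \pmod 4$, the sharper inequalities (\ref{eqn:sharperbound}) and (\ref{eqn:evensharperbound}) are used to keep $\mathcal{L}$ tractable.

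Second, for each $G \in \mathcal{L}$ I would apply the decomposition algorithm of Section~\ref{sec:decomp}. For every $G$ with $\det(G) > d_{\min}^2$, it must be shown that no $\{+1,-1\}$ matrix $R$ satisfies $RR^T = G$; this is done by exhausting, for each $H \in \mathcal{L}$ with the same characteristic polynomial as $G$, all search-tree leaves generated by the Gram-pair constraints (\ref{eq:block_nonlinear}) with $j=0,1,2$, falling back to the Hasse--Minkowski criterion of Section~4.3 on any pair $(G,H)$ whose backtracking tree proves too deep. The successful elimination of all such super-optimal $G$ establishes $D_{19} = d_{\min}$. For each $G \in \mathcal{L}$ with $\det(G) = d_{\min}^2$, I would run the enumerating variant of the algorithm over all compatible $H$, then apply \emph{nauty} to reduce the resulting list of designs modulo Hadamard equivalence.

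The final step is bookkeeping: verifying that the surviving Gram matrices reduce to exactly two Gram equivalence classes, represented by $G_1$ and $G_2$ of Figure~\ref{fig:gram19}, and that the surviving designs partition into exactly three Hadamard equivalence classes, represented by $R_1, R_2, R_3$, with the asserted relations $G_1 = R_1R_1^T = R_1^TR_1$ and $G_2 = R_2R_2^T = R_2^TR_2 = R_3R_3^T = R_3^TR_3$. The principal obstacle throughout is computational rather than conceptual: the Ehlich bound (\ref{eq:e_bound}) for $n \equiv 3 \pmod 4$ is considerably looser than (\ref{eq:eb_bound}), so the candidate list $\mathcal{L}$ is much larger than in the $n \equiv 1 \pmod 4$ cases that were resolved earlier, and indecomposability proofs for the super-optimal Gram matrices are correspondingly expensive; this is precisely why $n=19$ has stood open while neighbouring orders such as $17$ and $21$ were settled.
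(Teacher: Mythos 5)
Your proposal follows essentially the same two-step computational strategy as the paper: Gram-finding with threshold $d_{\min}=2^{30}\times 7^2\times 17$ (the paper found nine candidate Gram classes), then decomposition of each candidate, with the exhaustive variant plus \emph{nauty} to certify exactly three Hadamard classes and two Gram classes. The only cosmetic differences are that the paper never needed the Hasse--Minkowski fallback for $n=19$ (decomposition took under a second), and the nine candidates had pairwise distinct characteristic polynomials so only the pairs $(G,G)$ arose; the real cost for $n=19$ was in the Gram-finding search itself, not in the size of the final candidate list.
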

\end{samepage}

\begin{proof}
A computational proof of Theorem~\ref{thm:19} is described in
\S\S\ref{subsec:gram19}--\ref{subsec:decomp19}.
\end{proof}

\begin{remark} The maximal determinant given by {\rm(\ref{eqn:maxdet19})} 
is smaller by a factor $17/\sqrt{304} \approx 0.975$
than the Ehlich bound~{\rm(\ref{eq:e_bound})}
for $n=19$.
\end{remark}

\subsection{Candidate Gram-finding for order 19} \label{subsec:gram19}

In the algorithm described in \S\ref{sec:gram-finding}
we used $d_{\min} = 833 \times 4^6 \times
2^{18}$ since $\{+1,-1\}$ matrices with this determinant were known to
exist. Our candidate Gram-finding program took 826 hours\footnote{
Computer times mentioned here and below are for a single 2.3GHz Opteron
processor. In cases where the search could easily be parallelised,
we sometimes used several processors running in parallel. Our 
candidate Gram-finding program actually took 188 hours using several
processors, each operating on part of the search tree.
Our programs were written in C and used the GMP package 
to perform multiple-precision arithmetic.}
to find nine equivalence classes of candidate Gram matrices and to rule out
any others. 

For the nine candidate Gram matrices $G$, the values of
$\sqrt{\det(G)}/2^{30}$ were $840$ (five times),
$836.0625$ (once), $836$ (once), and $833$ (twice).
The matrices are available from the website~\cite{www37}.

\begin{center}
\begin{figure}[htbp]
\input "G1-19.tex"
\input "G2-19.tex"
\caption{Optimal Gram matrices for $n=19$.  
	 Here ``$-$" stands for ``$-1$".}
\label{fig:gram19}
\end{figure}
\end{center}

\subsection{Decomposition for order 19} \label{subsec:decomp19}

Our decomposition program found that seven of the candidate Gram
matrices were indecomposable, but the last two decomposed (as expected).
The running time was only 0.85 sec.  
Nevertheless, it would be extremely
tedious to attempt to replicate the search by hand, since it involves
visiting about $1400$ nodes in the search trees.

The nine matrices have distinct characteristic polynomials, so we only
had to consider the case $G = RR^T = H = R^TR$. Only the
two candidate Gram matrices with smallest determinant were decomposable, and
these decomposed in three ways, giving three Hadamard classes of
designs (maxdet matrices) of order $19$. See Figure~\ref{fig:gram19} for
the Gram matrices (note that they differ only in the first row and column), 
and Figure~\ref{fig:designs19} for two of the three designs.
The third design $R_3$ can be obtained from $R_2$ by a switching
operation, as indicated in Figure~\ref{fig:designs19}.

A variant of our decomposition program exhaustively searches for
all possible decompositions (up to equivalence) of a given pair $(G,H)$.
Running this program on $(G_1,G_1)$ gave $110592$ matrices in 36 seconds.
Using McKay's program \emph{nauty}~\cite{McKay79,nauty},
we verified that they were all equivalent to $R_1$.  
Similarly, on $(G_2,G_2)$ we obtained $3456$ matrices in 3 seconds, 
and \emph{nauty}
verified that $1728$ were equivalent to $R_2$, and the remaining
$1728$ were equivalent to $R_3$.  Thus, there are precisely three
inequivalent designs with maximal determinant.

\begin{center}
\begin{figure}[htbp]
\input "R1-19.tex"
\input "R23-19.tex"
\caption{Two inequivalent saturated D-optimal designs of order $19$.  Here ``$-$" stands for ``$-1$" and ``$+$" stands for ``$+1$".
A third inequivalent design $R_3$ is the same as $R_2$ except 
that the circled entries 
have their signs reversed (this is an example of ``switching'', 
see Orrick~\cite{Orrick08a}).}
\label{fig:designs19}
\end{figure}
\end{center}

\section{The Maximal Determinant for Order 37}	\label{sec:order37}

The case of order 37 was handled in much the same way as order 19. Although
37 is much larger than 19, we have $37 \equiv 1 \bmod\ 4$, and typically
the cases $1 \bmod\ 4$ are easier than the cases $3 \bmod 4$ (as one can
see from the summary at~\cite{maxdet}). This is partly because 
Theorem~\ref{thm:enhancedKM} gives a sharper bound when $n \equiv 1 \bmod 4$.
 
We established that, for order $37$,
a known design, found previously by Orrick and Solomon~\cite{OrSo07},
is D-optimal. The design is not unique, but the corresponding 
Gram matrix is (up to equivalence).

\begin{theorem} \label{thm:37}
The maximal determinant of $\{+1, -1\}$ order $37$ matrices  is
\begin{equation} \label{eqn:maxdet37}
72 \times 9^{17} \times 2^{36} = 2^{39} \times 3^{36}.
\end{equation}
A representative $R$ of one equivalence class 
of saturated D-optimal designs is
indicated in Figure~\ref{fig:design37}.
The corresponding Gram matrix is
$G = RR^T = R^T R$ as shown in Figure~\ref{fig:gram37}.
Moreover, $G$ is unique, up to equivalence.
\end{theorem}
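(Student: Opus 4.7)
The plan is to mirror the two-step computational strategy employed for $n=19$ in \S\ref{subsec:gram19}--\S\ref{subsec:decomp19}, specialised to the bound $d_{\min}^2 = 2^{39}\times 3^{36}$ which is known to be attainable thanks to the Orrick--Solomon design~\cite{OrSo07}. I would first run the Gram-finding algorithm of \S\ref{sec:gram-finding} with this value of $d_{\min}$. The search is over parity-normalized candidate principal minors of order $r=1,2,\ldots,37$; at each incomplete level the bounds of Theorem~\ref{thm:enhancedKM} are used to prune subtrees whose descendants cannot possibly reach determinant $d_{\min}^2$. Because $37 \equiv 1 \pmod 4$, only the ``standard'' bound~(\ref{eqn:bound}) together with~(\ref{eqn:sharperbound}) are needed (not the block bound~(\ref{eqn:evensharperbound}), which was designed for $n \equiv 3 \pmod 4$); this is the same simplification that makes the $1 \bmod 4$ cases appreciably cheaper than the $3 \bmod 4$ cases. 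The output is a complete list $\mathcal{L}$ of Gram-equivalence-class representatives $G$ with $\det(G) \ge d_{\min}^2$.

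Next, I would run the decomposition algorithm of \S\ref{sec:decomp} on every pair $(G,H)\in \mathcal{L}^2$ whose elements share the same characteristic polynomial (pairs with distinct characteristic polynomials are excluded because $G$ and $H=R^TR$ must be similar). For each such pair the back-tracking procedure of \S\ref{subsec:nonlinear} is applied, enhanced with Gram-pair constraints~(\ref{eq:block_nonlinear}) at $j=1$ and $j=2$ in addition to the linear single-Gram constraint. Candidate pairs for which back-tracking is excessively slow can be attacked via the Hasse--Minkowski criterion of \S2.3, which on some hard instances proves indecomposability far faster than exhaustive search. For those few remaining pairs where neither the deterministic back-tracker nor Hasse--Minkowski terminates in reasonable time, the nondeterministic variant can be used to \emph{find} a decomposition stochastically; success immediately certifies the candidate as realizable. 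Together these three variants should decide realizability for every $(G,H)$ in $\mathcal{L}^2$.

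The maximality statement~(\ref{eqn:maxdet37}) will follow from: (i) the known Orrick--Solomon design gives a lower bound $D_{37}\ge 2^{39}\times 3^{36}$, so some $G\in\mathcal{L}$ is decomposable; and (ii) the computation shows that no $G\in\mathcal{L}$ with $\det(G)>2^{39}\times 3^{36}$ is decomposable, which rules out any larger determinant. The uniqueness clause ``$G$ is unique up to equivalence'' requires one additional verification: that among the decomposable candidates in $\mathcal{L}$, exactly one Gram-equivalence class attains the extremal determinant $2^{39}\times 3^{36}$. This will fall out of the same exhaustive pass, since the Gram-finder already returns one representative per Gram-equivalence class. (Note that uniqueness of $G$ does not imply uniqueness of $R$, which is why the theorem asserts only that $R$ is a representative rather than the unique design.)

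The main obstacle I expect is sheer scale: at $n=37$ the Gram-finding tree is vastly larger than at $n=19$, so careful pruning via Theorem~\ref{thm:enhancedKM} and heavy parallelisation across the top of the tree will be essential just to produce $\mathcal{L}$ in tractable time. A secondary obstacle is that a candidate $G$ with determinant close to, but not equal to, $d_{\min}^2$ may require a back-tracking search of substantial depth to prove indecomposability; for any such $G$ the Hasse--Minkowski test on the powers $G^{j+1}$ versus $H^j$ (for $j<37$) or, failing that, the randomised decomposition variant will be the fallback. Everything else is bookkeeping: verifying the arithmetic identity $72\cdot 9^{17}\cdot 2^{36}=2^{39}\cdot 3^{36}$, exhibiting the Orrick--Solomon $R$ explicitly, and checking $G=RR^T=R^TR$ for that $R$.
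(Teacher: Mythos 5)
Your proposal follows essentially the same two-step computational route as the paper: run the Gram-finder with $d_{\min} = 2^{39}\times 3^{36}$ (the paper obtains $807$ candidate Gram matrices), then attempt decomposition of all pairs sharing a characteristic polynomial, observe that all but one candidate fail to decompose, and use the randomised variant to exhibit a decomposition of the single survivor, which together with the Orrick--Solomon lower bound gives both the maximal determinant and the uniqueness of $G$. One small correction: inequality~(\ref{eqn:sharperbound}), like~(\ref{eqn:evensharperbound}), is stated in Theorem~\ref{thm:enhancedKM} only for $n \equiv 3 \pmod 4$, so for $n = 37$ only the basic bound~(\ref{eqn:bound}) is legitimately available for pruning.
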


\begin{remark} The maximal determinant given by {\rm(\ref{eqn:maxdet37})} 
is smaller by a factor $8/\sqrt{73} \approx 0.936$
than the Ehlich-Barba bound~{\rm(\ref{eq:eb_bound})}
for $n=37$.
\end{remark}

\begin{center}
\begin{figure}[hbp]
\input "G-37.tex"
\caption{The optimal Gram matrix for $n=37$.
	 All omitted entries are $1$.}
\label{fig:gram37}
\end{figure}
\end{center}

The matrix $R$ in Figure~\ref{fig:design37} was constructed by Orrick
and Solomon from a doubly 3-normalized Hadamard matrix of order~36.
There are at least 78 (and probably many more) inequivalent designs,
as we discuss below.

\subsection{Candidate Gram matrices for order 37}

Our backtracking program with bound $d_{\min} = 2^{39}3^{36}$
(93.6\% of the Ehlich-Barba bound) took 
77 hours 
to find 807 candidate Gram matrices. These had 284 distinct determinants
$\Delta^2$ in the range 
$\Delta/({2^{39}3^{32}}) \in [81,85].$
The candidate Gram matrices are available from the website~\cite{www37}.

\subsection{Decomposition for order 37}		\label{subsec:decomp37}

We applied our decomposition program to all pairs $(G,H)$ of candidate Gram
matrices where $G$ and $H$ had the same characteristic polynomial.
There were $489$ different characteristic polynomials,
and $1528$ pairs $(G,H)$ to consider.
The decomposition algorithm took 
$257$ seconds 
to show that $806$ of the
candidate Gram matrices did not decompose (in no case could more than two
rows of $R$ be constructed).

For the remaining candidate, which was in fact equivalent to the Gram matrix
$G$ shown in Figure~\ref{fig:gram37}, the program was stopped after running
for $147$ hours and exploring about $1.7 \times 10^8$ nodes (reaching level 26
of the tree). 

A variant of our decomposition program uses a randomised search~-- at each
node of the tree being searched, we choose to explore one (or sometimes two) 
children selected uniformly at random.
Using this randomised search program we can decompose $G$, 
in fact we have now found 39 solutions. Finding one solution
takes on average about 125 hours. By also considering duals, 
we get 78 solutions.
Some (but not all) of these can be obtained from a Hadamard
matrix of order $36$, in the same way as 
the matrix $R$ of Figure~\ref{fig:design37}.
Since all $78$ solutions are inequivalent,
we expect that many more inequivalent solutions exist.
The known solutions are available from the website~\cite{www37}.

\begin{center}
\begin{figure}[htp]
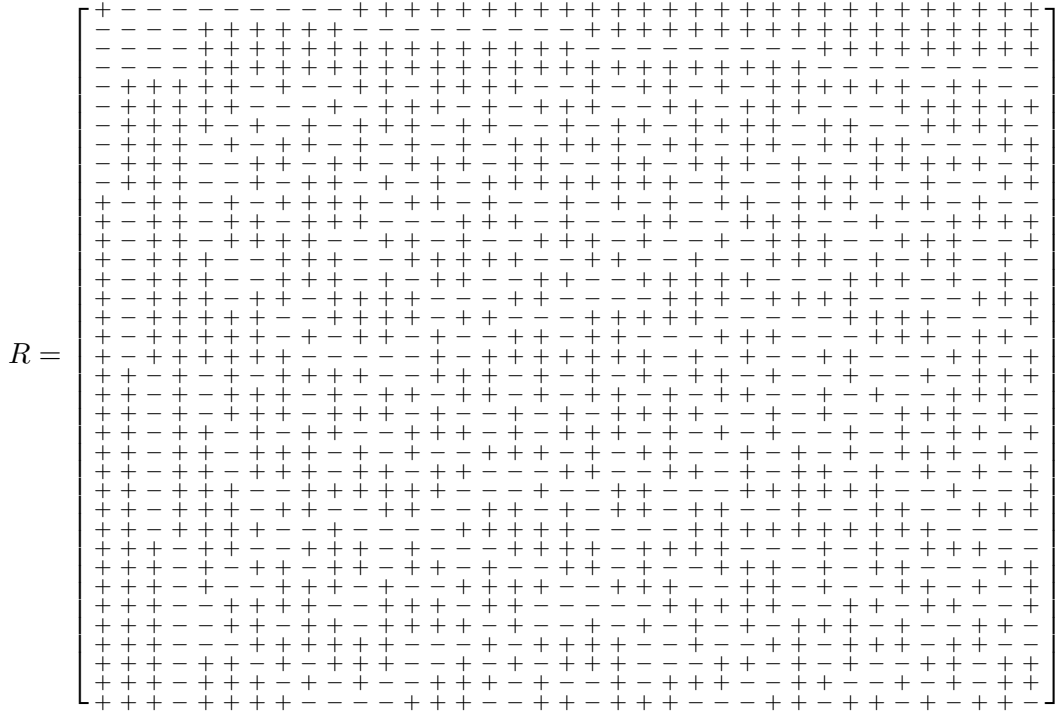

\input "R-37.tex"
\caption{A saturated D-optimal design of order $37$,
constructed by Orrick and Solomon.  
Here ``$-$" stands for ``$-1$" and ``$+$" stands for ``$+1$".}
\label{fig:design37}
\end{figure}
\end{center}

\section{Improved Bounds for Various Orders} \label{sec:other_bounds}

Recall that $d_n$ is the maximal determinant for order $n$, divided by
the known factor $2^{n-1}$.
Table~1 summarises the best known upper and lower bounds on $d_n$
for orders $n = 19, 29, 33, 37, 45, 49, 53, 57$
(we omit \hbox{$n = 25, 41, 61$} 
because for these orders
the Ehlich-Barba bound~(\ref{eq:eb_bound})
is attained). 
The figures in parentheses are the ratios of
the entries to the 
Ehlich-Barba bound
(for $n \equiv 1 \bmod 4$)
or the Ehlich bound 
(for $n \equiv 3 \bmod 4$),
rounded to three decimals.

For $n = 19$ and $n = 37$, the upper and lower bounds are equal, and thus
optimal ($d_n = u = \ell$ in these cases).
In the other cases the upper bounds are unattainable, so $d_n \in [\ell, u)$.
In all cases the upper bounds are new, and for $n=45$ the lower bound is
new (the previous best lower bound was $83 \times 11^{21}$).

The last column gives the number of equivalence classes of candidate
Gram matrices $G$ with $\det(G) \ge (2^{n-1}u)^2$. 

\begin{center}
\begin{tabular}{|c|cc|cc|c|}
\hline
{\rm order} $n$ & 
\multicolumn{2}{|c|}{lower bound $\ell$} & 
\multicolumn{2}{|c|}{upper bound $u$} &
Gram count\\[2pt]
\hline
&&&&&\\[-10pt]
$19$ & $833 \times 4^6$ & $(0.975)$ 
	& {$833 \times 4^6$} & $(0.975)$ & 9\\[1pt]
$29$ & $320 \times 7^{12}$ & $(0.865)$
	& {$329 \times 7^{12}$} & $(0.889)$ & 9587 \\[1pt]
$33$ & $441 \times 8^{14}$ & $(0.855)$
	& {$470 \times 8^{14}$} & $(0.911)$ & 13670 \\[1pt]
$37$ & $8 \times 9^{18}$ & $(0.936)$ 
	& {$8 \times 9^{18}$} & $(0.936)$ & 807 \\[1pt]
$45$ & $89 \times 11^{21}$ & $(0.858)$ 
	& $99 \times 11^{21}$ & $(0.953)$ & 1495 \\[1pt]
$49$ & $96 \times 12^{23}$ & $(0.812)$ 
	& $114 \times 12^{23}$ & $(0.965)$ & 168 \\[1pt]
$53$ & $105 \times 13^{25}$ & $(0.788)$ 
	& $129 \times 13^{25}$ & $(0.968)$ & 220\\[1pt]
$57$ & $133 \times 14^{27}$ & $(0.894)$	
	& $145 \times 14^{27}$ & $(0.974)$ & 128 \\[1pt]
\hline
\end{tabular}
\end{center}
\begin{center}
Table 1: Bounds on the (scaled) maximal determinant $d_n$
\end{center}

\subsection{Discussion}

Let $k = \lfloor n/4 \rfloor$. From the summary 
at~\cite{maxdet} we observe that,
in the cases $k \le 3$ 
where $d_{4k+3}$ is known precisely, $d_{4k+3}$ 
is divisible by $k^{2k-1}$. 
However, our result for $n=19$ shows that this pattern does
not continue, for $d_{19}$ is not divisible by $4^7$.

In all cases where $d_{4k+1}$ is known precisely
($k \le 6$ and $k = 10, 15, 28, \ldots$), $d_{4k+1}$
is divisible by $k^{2k-1}$.  This is easily seen to be true
if the 
Ehlich-Barba bound (\ref{eq:eb_bound}) is attainable 
($d_{4k+1}$ is divisible by $k^{2k}$ in such cases),
but it is also true for $k = 2, 4, 5$, where the 
Ehlich-Barba bound is not attainable.

For $n=29$, we ruled out $9587$ candidate Gram matrices to
show that $d_{29} < 329 \times 7^{12}$.
If $d_{29}$ is divisible by $7^{13}$, then we must
have $d_{29} = 322 \times 7^{12} = 46 \times 7^{13}$, 
since this is the only multiple of $7^{13}$ in the allowable interval
$[320\times 7^{12},\, 329 \times 7^{12})$.
However, all attempts to
construct an example of order $29$
with $|\det|/2^{28} > 320\times 7^{12}$, using hill-climbing or constructions
based on Hadamard matrices of order 28, have failed.
Thus, the plausible conjecture that $d_{4k+1}$ is divisible by $k^{2k-1}$
may well be false.
An attempt to reduce the upper bound $u$ to $322 \times 7^{12}$ 
is underway but may
not be feasible with our current resources~-- so far we have
generated $16683$ candidate Gram matrices (taking about two processor-years)
but estimate that there are 
about $220000$ in all. 

For $n=33$ we ruled out 
$13670$ candidate Gram matrices to establish an upper 
bound of $u = 470 \times 8^{14}$. It is unlikely that we can reduce
$u$ much further without improvements in the candidate Gram-finding
program, since it took about two processor-years to generate
the candidates for $u = 470$, though only about six hours to show that
none of them decompose.

For $n=45$, the new lower bound of $89 \times 11^{21}$ was
established by a construction using 
a doubly 3-normalized Hadamard matrix of order~$44$.
Details will appear elsewhere.  

\section{The Spectrum for Order 13}	\label{sec:spectrum}

The \emph{spectrum} $S_n$ of the determinant function for $\{+1,-1\}$ 
matrices is defined to
be the set of values taken by $|\det(R_n)| / 2^{n-1}$ as $R_n$ 
ranges over all $n\times n$ $\{+1,-1\}$ matrices. For $2 \le n \le 7$, 
the spectrum includes all integers
between $0$ and $d_n$. 
The spectrum for
$n=8$ was first computed by 
Metropolis, Stein, and Wells~\cite{Metropolis69}, who found that
gaps occur, 
in fact $S_8 = \{0,1,\ldots,18,20,24,32\}$.
A (non-computer-based) proof of the existence of gaps was later
given by Craigen~\cite{Craigen90}. 
At present, the spectrum is known for $n \le 11$ and (given here for the
first time) for $n=13$. 
The results for $n=9$ are due to \v{Z}ivkovi\'{c}~\cite{Zivkovic06}
(and, independently, Charalambides~\cite{Charalambides05}),
those for $n=10$ are due to \v{Z}ivkovi\'{c}~\cite{Zivkovic06},
and those for $n=11$ are due to Orrick~\cite{Orrick05}.
The spectra for $n \le 11$, and 
conjectured spectra for $n=12$ and $14 \le n \le 17$, 
may be found
at~\cite{spectrum}. Here we give only the spectrum for $n=13$,
using the notation $a{\dotdot}b$ as a shorthand to represent 
the interval $\{x \in \integer: a \le x \le b\}$.
\begin{theorem}
\noindent
The spectrum for order $13$ is\\
$S_{13} = \{
0{\dotdot}2172,
2174{\dotdot}2185,
2187{\dotdot}2196,
2199{\dotdot}2202,
2205,
2208,
2210, 
2211,\linebreak
2214{\dotdot}2218,
2220{\dotdot}2226,
2228, 
2229, 
2230,
2232, 
2233,
2235,
2238,
2240, 
2241,\linebreak
2243{\dotdot}2245,
2247, 
2248,
2250,
2253,
2256,
2258{\dotdot}2260,
2262,
2264, 
2265,
2267,\linebreak 
2268,
2271, 
2272,
2274,
2277,
2280,
2283,
2286,
2288,
2292,
2295, 
2296,
2304,\linebreak
2307,
2312, 
2313,
2316,
2319,
2320,
2322,
2325,
2328,
2331,
2334,
2336,
2340,\linebreak
2343, 
2344,
2349,
2352,
2355,
2360, 
2361,
2367, 
2368,
2370,
2373,
2376,
2385,\linebreak
2394,
2400,
2403,
2406,
2421,
2430,
2432,
2439,
2457,
2472,
2484,
2496,
2511,\linebreak
2520,
2538,
2560,
2583,
2592,
2619,
2646,
2673,
2835,
2916,
3159,
3645\}$.
\end{theorem}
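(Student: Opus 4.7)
The plan is to compute $S_{13}$ by combining the methods of Sections~\ref{sec:gram-finding}--\ref{sec:decomp} with direct construction of matrices for the lower range. First, I would note that $3645 = 5\cdot 12^6/2^{12}$ is the Ehlich--Barba bound~(\ref{eq:eb_bound}) for $n=13$, and this bound is known to be attained by a Paley-type construction, so the spectrum is a subset of $\{0,1,\dotsc,3645\}$. The statement naturally splits into (a) a contiguous initial segment $0{\dotdot}2172$ where every integer occurs, and (b) a sparse ``tail'' above $2172$ consisting of short intervals and isolated values. These two parts call for different techniques.

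For the tail (b), I would run the Gram-finding algorithm of \S\ref{sec:gram-finding} with $d_{\min}=2173$ to enumerate, up to Gram equivalence, every candidate Gram matrix $G$ of order $13$ with $\det(G)=d^2$ and $d\ge 2173$. Because $13\equiv 1\pmod 4$, the pruning bound of Theorem~\ref{thm:enhancedKM} (in its sharper form for the $1\bmod 4$ case) is very effective, so the candidate list should be modest. For each candidate I would then invoke the decomposition algorithm of \S\ref{sec:decomp}, using single-Gram plus Gram-pair constraints with degrees $j=1,2$, to decide whether $G=RR^T$ for some $R\in\{+1,-1\}^{13\times 13}$; in the stubborn indecomposable cases I would fall back on the Hasse--Minkowski criterion. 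The collection of $d$ values for which at least one representative decomposes is exactly $S_{13}\cap[2173,3645]$; the gaps (e.g.\ $2173, 2186, 2197, 2198, 2203, 2204, 2206, 2207, 2209, 2212, 2213,\dotsc$) are precisely those $d$ for which either no candidate Gram matrix exists or all candidates are indecomposable.

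For the dense initial segment (a), I would exhibit, for each integer $v\in\{0,1,\dotsc,2172\}$, a $13\times 13$ matrix $R$ with $|\det R|/2^{12}=v$. A practical recipe is to start from several ``seed'' matrices with determinants spread across the range (including known D-optimal designs, singular matrices, and designs of known submaximal types) and then apply local operations --- single-entry sign flips, row/column negations, and row replacements --- whose effect on $|\det R|/2^{12}$ can be computed by the matrix determinant lemma. An efficient implementation is to maintain a set $T\subseteq\{0,\dotsc,2172\}$ of values already hit, enumerate neighbours of current matrices, and iterate until $T$ fills the interval; brute random sampling supplements this and typically fills most of the range quickly. Values $0$ and very small $v$ are handled by explicit singular or nearly singular constructions.

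The main obstacle is the completeness of the Gram enumeration just above the threshold in step (b): to be sure, for example, that $2173$ is \emph{not} in $S_{13}$, the Gram-finding search must provably produce every equivalence class with $d\ge 2173$, and each must be shown indecomposable. This is a purely computational burden and is the same difficulty overcome for order $19$ and $37$ in \S\S\ref{sec:order19}--\ref{sec:order37}, but at order $13$ the search trees should be far smaller, so running time is not expected to be the bottleneck; the main care required is the bookkeeping to guarantee that the enumeration is exhaustive. Filling in the lower segment in step (a) is conceptually easier but tedious to verify; a small amount of searching per missing value, amortised over all $2173$ targets, suffices.
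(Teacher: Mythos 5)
Your proposal follows essentially the same route as the paper: a heuristic/local-search construction to realise every value in the contiguous segment $0{\dotdot}2172$, followed by an exhaustive Gram-finding run with threshold $d_{\min}=2173$ and the decomposition algorithm to determine exactly which values above $2172$ are attained. The paper's proof is precisely this two-part computation (it reports $8321$ candidate Gram matrices, of which $1643$ decompose, yielding the $130$ distinct values in $[2174,3645]$), so your approach matches.
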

\begin{proof}
The proof is computational.  Using a heuristic algorithm
described in~\cite[pg.~34]{Orrick-spectrum-talk}, we found examples of
order~$13$ matrices with all $2173$ determinants 
$0, 1\times 2^{12}, 2 \times 2^{12}, \ldots, 2172\times 2^{12}$.
The first ``gap'' was at $2173\times 2^{12}$.  
We ran the Gram-finding program
of~\S\ref{sec:gram-finding} with lower bound $d_{\min} = 2173\times 2^{12}$.
It produced $8321$ candidate Gram matrices in $73$ minutes.
We then ran the decomposition program of~\S\ref{sec:decomp} which found 
(in $48$ seconds) that $1643$ 
of the candidate Gram matrices decomposed, 
giving $130$ distinct determinants in the range $[2174,3645]$.
These are listed in the statement of the theorem.
\end{proof}

\appendix
\section*{Appendix: Proof of new bound~(\ref{eqn:evensharperbound}) in
Theorem~\ref{thm:enhancedKM}}
The proof is a generalisation Ehlich's proof~\cite{Ehlich64a} of the
bound~(\ref{eq:e_bound}) on the maximal determinant, which applies in the
case $n\equiv3\pmod{4}$.  Ehlich's proof shows the following.
\begin{enumerate}
\item[(1)] The candidate principal minor of maximal determinant has 
non-diagonal elements equal to either $-1$ or $3$.
\item[(2)] It is a {\em block matrix}, which means that the 
non-diagonal 3s occur in square blocks along the diagonal. 
\item[(3)] In the case that the candidate principal minor is a 
candidate Gram matrix, that is, its size is $n$, the number of blocks is $s$, 
with $u$ blocks of size $\lfloor n/s\rfloor$ and $v$ blocks of 
size $\lfloor n/s\rfloor+1$, where $s$, $u$, and $v$ are defined 
following~(\ref{eq:e_bound}).
\end{enumerate}
We generalise this to the case where the candidate principal minor contains 
a fixed principal submatrix $M_r$.  If such a candidate principal minor of 
maximal determinant is written as
\begin{equation*}
\begin{bmatrix}M_r & B\\B^T & A\end{bmatrix},
\end{equation*}
then our result, assuming the hypothesis $\det M_r>(n-3)\det M_{r-1}$, 
is that $A$ satisfies properties (1) and (2) above, and the submatrices 
of $B$ corresponding to blocks of $A$ consist of repeated columns.  
The generalisation of (3) depends on $M_r$ and is not unique in general.

{From} now on, we take $n\equiv3\pmod{4}$ and $n>3$.  Define
\begin{align}
\mathfrak{C}_m=&\left\{C_m|C_m=(c_{ij}),c_{ij}=c_{ji},C_m\text{ is pos.\ def.},c_{ii}=n,\right.\notag\\
&\ \left.c_{ij}\equiv n\pmod{4},i,j=1\ldots m\right\}.
\end{align}
and
\begin{equation}
\mathfrak{E}_m=\{E_m|\text{$E_m\in \mathfrak{C}_m$ and the 
leading $r\times r$ submatrix of $E_m$ is $M_r$}\}.
\end{equation}
Define $C_m^*$ and $E_m^*$ (which  may not be unique) by the conditions
\begin{align*}
\det C_m^*&=\max\{\det C_m|C_m\in\mathfrak{C}_m\},\\
\det E_m^*&=\max\{\det E_m|E_m\in\mathfrak{E}_m\}.
\end{align*}

The first thing Ehlich proves (Theorem 2.1) is that 
$\det C_m^*>(n-3)\det C_{m-1}^*$ for $2\le m\le n$.  
To explain the use of this theorem, we first introduce a notation.  
If $C_m\in\mathfrak{C}_m$ then define ${\tilde C}_m$ be the matrix 
that results from replacing the last diagonal element of $C_m$ by 3, 
i.e. ${\tilde C}_m=({\tilde c}_{ij})$ where
\begin{align*}
\tilde c_{ij}=\begin{cases}
3 & \text{if $i=j=m$}\\
c_{ij} & \text{otherwise.}
\end{cases}
\end{align*}
Expanding $\det C_m^*$ by minors on its last row, we find
\begin{equation}
\det C_m^*-\det\tilde C_m^*=(n-3)\det C_{m-1}\le(n-3)\det C_{m-1}^*
\end{equation}
where $C_{m-1}$ is the leading $(m-1)\times(m-1)$ submatrix of $C_m^*$. 
(Note that $\tilde C_m^*$ is the result of applying the tilde operation to
$C_m^*$.) The theorem then implies that $\det\tilde C_m^*>0$ and therefore
that $\tilde C_m^*$ is positive definite.  This becomes important in later
proofs when evaluating determinants that arise as the result of column
operations.

For the generalization to our case, we appear to need the extra condition
$\det\tilde M_r>0$.  This cannot be expected to hold in general.  Consider
for example,
\begin{equation}
M_2=\begin{bmatrix}11 & 7\\7 & 11\end{bmatrix}
\end{equation}
for which $\det\tilde M_2<0$.  For now, we leave it as a question for
empirical study whether the condition holds often enough in practical
searches for the following considerations to be useful.

\begin{theorem}
Let $1\le r\le m$, let $M_r\in\mathfrak{C}_r$, and let $\det\tilde M_r>0$. 
Then the set of elements $E_m\in\mathfrak{E}_m$ for which $\det\tilde E_m>0$
is non-empty.
\end{theorem}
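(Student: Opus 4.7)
The plan is induction on $k = m - r$, constructing $E_k \in \mathfrak{E}_k$ with $\det \tilde E_k > 0$ for each $k$ in the range $r \le k \le m$. The base case $k = r$ is trivial: take $E_r = M_r$ and invoke the hypothesis.

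For the inductive step, given $E_k \in \mathfrak{E}_k$ with $\det \tilde E_k > 0$, I would extend it to
\[
E_{k+1} = \begin{pmatrix} E_k & v \\ v^T & n \end{pmatrix}, \qquad v := \tilde E_k\, e_k = E_k e_k - (n-3)e_k,
\]
so that $v$ is the last column of $E_k$ with its diagonal entry $n$ replaced by $3$. The first $k-1$ entries of $v$ are off-diagonal entries of $E_k$ and thus $\equiv 3 \pmod 4$, while the final entry is $3$, so the congruence and diagonal conditions of $\mathfrak{C}_{k+1}$ are satisfied; the leading $r \times r$ submatrix is unchanged.

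A Schur-complement computation, using $v = E_k e_k - (n-3)e_k$ and symmetry of $E_k$, gives $v^T E_k^{-1} v = n - 2(n-3) + (n-3)^2 (E_k^{-1})_{k,k}$. Combined with the cofactor identity $\det \tilde E_k = (\det E_k)\bigl(1 - (n-3)(E_k^{-1})_{k,k}\bigr)$, this yields the clean pair of formulas
\[
\det E_{k+1} = (n-3)(\det E_k + \det \tilde E_k), \qquad \det \tilde E_{k+1} = (n-3)\det \tilde E_k,
\]
both strictly positive. Hence $E_{k+1}$ is positive definite (its leading minors of size $\le k$ being those of $E_k$) and $\det \tilde E_{k+1} > 0$, closing the induction; telescoping yields the explicit formula $\det \tilde E_m = (n-3)^{m-r}\det \tilde M_r$.

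The main obstacle is identifying the correct extension vector $v$. Natural candidates such as $v = -j_k$ fail in general: for instance, when $E_k = (n+1)I_k - J_k$ with $k$ close to $n$ one has $j_k^T E_k^{-1} j_k = k/(n+1-k) \ge 3$. The key insight is that the hypothesis $\det \tilde E_k > 0$ is equivalent to the single scalar inequality $(E_k^{-1})_{k,k} < 1/(n-3)$, and that taking $v$ proportional to the last column of $\tilde E_k$ makes $v^T E_k^{-1} v$ depend on $E_k^{-1}$ only through this one entry, thereby exploiting the hypothesis in the sharpest possible way. Once the right $v$ is in hand, the rest is routine Schur-complement bookkeeping.
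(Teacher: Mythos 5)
Your proof is correct and is essentially the paper's own argument: your extension vector $v=\tilde E_k e_k$ produces exactly the matrix the paper builds by duplicating the last row and column of $E_k$ with a $3$ in the new off-diagonal corner, and your two identities $\det E_{k+1}=(n-3)(\det E_k+\det\tilde E_k)$ and $\det\tilde E_{k+1}=(n-3)\det\tilde E_k$ are precisely the ones the paper obtains. The only difference is cosmetic: you derive them via Schur complements and the rank-one determinant identity, whereas the paper uses a column subtraction followed by cofactor expansion.
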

\begin{proof}
We prove the theorem by induction on $m$.  For the base case, $m=r$, the
matrix $M_r\in\mathfrak{E}_r$ satisfies $\det\tilde{M}_r>0$ by assumption. 
Now assume that the theorem holds for all $\mathfrak{E}_k$ with $r\le k\le m$. 
We want to show that it holds for $\mathfrak{E}_{m+1}$.  Let $E_m$ be
an element of $\mathfrak{E}_m$ for which $\det\tilde E_m>0$, and write this
element as
\begin{equation}
E_m=\begin{bmatrix}C_{m-1} & \gamma\\\gamma^T & n\end{bmatrix}.
\end{equation}
Form the matrix
\begin{equation}
E_{m+1}=\begin{bmatrix}C_{m-1} & \gamma & \gamma\\\gamma^T & n & 3\\\gamma^T & 3 & n\end{bmatrix}.
\end{equation}
Subtracting column $m$ from column $m+1$ and expanding by minors on column
$m+1$ we find that $\det E_{m+1}=(n-3)\det E_m+(n-3)\det\tilde E_m$.  Both
terms are positive, which means that $\det E_{m+1}$ is positive, and
therefore that $E_{m+1}$ is positive definite and hence an element of
$\mathfrak{E}_{m+1}$.  By a similar computation $\det\tilde
E_{m+1}=(n-3)\det\tilde E_m>0$.  Therefore $E_{m+1}$ is a suitable element.
\end{proof}

\begin{theorem}
Let $1\le r<m$, let $M_r\in\mathfrak{C}_r$, and let $\det\tilde M_r>0$. Then
\begin{equation*}
\det E_m^*>(n-3)\det E_{m-1}^*.
\end{equation*}
\end{theorem}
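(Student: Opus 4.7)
The plan is to combine the doubling construction from the preceding theorem with an induction on $m\ge r+1$, reducing the entire statement to a positivity claim about $\det\tilde E_{m-1}^*$. For any $E_{m-1}\in\mathfrak{E}_{m-1}$, writing it as
\begin{equation*}
E_{m-1}=\begin{bmatrix}C_{m-2} & \gamma\\ \gamma^T & n\end{bmatrix},
\end{equation*}
the preceding theorem's construction produces
\begin{equation*}
E_m=\begin{bmatrix}C_{m-2} & \gamma & \gamma\\ \gamma^T & n & 3\\ \gamma^T & 3 & n\end{bmatrix}\in\mathfrak{E}_m
\end{equation*}
with $\det E_m=(n-3)\det E_{m-1}+(n-3)\det\tilde E_{m-1}$. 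Applying this to the maximizer $E_{m-1}=E_{m-1}^*$ yields $\det E_m^*\ge (n-3)\det E_{m-1}^*+(n-3)\det\tilde E_{m-1}^*$, so it suffices to prove $\det\tilde E_{m-1}^*>0$.

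I would establish this by induction on $m\ge r+1$. The base case $m=r+1$ is immediate: $\mathfrak{E}_r=\{M_r\}$, so $E_r^*=M_r$, and $\det\tilde E_r^*=\det\tilde M_r>0$ by hypothesis. For the inductive step, $m\ge r+2$, expand $\det E_{m-1}^*$ by cofactors along its last row (equivalently, use a Schur complement) to obtain
\begin{equation*}
\det\tilde E_{m-1}^*=\det E_{m-1}^*-(n-3)\det C_{m-2},
\end{equation*}
where $C_{m-2}$ is the leading $(m-2)\times(m-2)$ principal submatrix of $E_{m-1}^*$. Because the leading $r\times r$ submatrix of $E_{m-1}^*$ is $M_r$, and $C_{m-2}$ inherits positive definiteness from $E_{m-1}^*$, we have $C_{m-2}\in\mathfrak{E}_{m-2}$ and hence $\det C_{m-2}\le\det E_{m-2}^*$. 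The inductive hypothesis at level $m-1$ gives $\det E_{m-1}^*>(n-3)\det E_{m-2}^*\ge(n-3)\det C_{m-2}$, whence $\det\tilde E_{m-1}^*>0$, closing the induction.

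The principal conceptual hurdle the argument has to navigate is that we never hold an explicit maximizer $E_{m-1}^*$; we can only use abstract properties of it. The trick that makes the induction go through is the cofactor identity above, which converts the qualitative statement $\det\tilde E_{m-1}^*>0$ into the quantitative comparison $\det E_{m-1}^*>(n-3)\det E_{m-2}^*$, and the latter is precisely the theorem at the previous level. A minor but necessary bookkeeping step is to check that the constructed $E_m$ lies in $\mathfrak{E}_m$: the residue conditions $c_{ij}\equiv n\pmod 4$ and the leading-$r$ block $M_r$ are inherited from $E_{m-1}^*$, and positive definiteness follows from Sylvester's criterion, since the $(m-1)\times(m-1)$ leading principal submatrix of $E_m$ is $E_{m-1}^*$ and $\det E_m$ is positive by the computation above.
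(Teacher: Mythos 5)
Your proposal is correct and follows essentially the same route as the paper: the doubling construction applied to the maximizer gives $\det E_m^*\ge(n-3)\det E_{m-1}^*+(n-3)\det\tilde E_{m-1}^*$, and the positivity of $\det\tilde E_{m-1}^*$ is obtained from the cofactor identity $\det\tilde E_{m-1}^*=\det E_{m-1}^*-(n-3)\det C_{m-2}$ together with the inductive hypothesis at the previous level (the paper records this positivity separately as Corollary~\ref{coro:tildePosDef}). The only differences are an index shift in how the induction is phrased and your explicit verification that the constructed matrix lies in $\mathfrak{E}_m$, which the paper delegates to the preceding theorem's proof.
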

\begin{proof}
When $m=r+1$ we write
\begin{equation}
M_r=\begin{bmatrix}C_{r-1} & \gamma\\\gamma^T & n\end{bmatrix}
\end{equation}
and define
\begin{equation}
E_{r+1}=\begin{bmatrix}C_{r-1} & \gamma & \gamma\\\gamma^T & n & 3\\\gamma^T & 3 & n\end{bmatrix}.
\end{equation}
{From} the proof of the previous theorem we know that
$E_{r+1}\in\mathfrak{E}_{r+1}$ and $\det\tilde E_{r+1}>0$.  Now $\det
E_{r+1}^*\ge\det E_{r+1}=(n-3)\det M_r+(n-3)\det\tilde M_r>(n-3)\det
M_r=(n-3)\det E_r^*$.

We now proceed by induction.  Assume that $\det E_m^*>(n-3)\det E_{m-1}^*$. 
Write
\begin{equation}
E_m^*=\begin{bmatrix}E_{m-1} & \gamma\\\gamma^T & n\end{bmatrix}
\end{equation}
and define
\begin{equation}
E_{m+1}=\begin{bmatrix}E_{m-1} & \gamma & \gamma\\\gamma^T & n & 3\\\gamma^T
& 3 & n\end{bmatrix}.
\end{equation}
Now $\det E_{m+1}=(n-3)\det E_m^*+(n-3)\det\tilde E_m^*$.  Note that $\det
E_m^*=(n-3)\det E_{m-1}+\det\tilde E_m^*\le(n-3)\det E_{m-1}^*+\det\tilde
E_m^*$.  From the induction hypothesis, it follows that $\det\tilde
E_m^*>0$, and so $\det E_{m+1}>(n-3)\det E_m^*$.
\end{proof}

This proof contains the proof of an important corollary:

\begin{corollary}\label{coro:tildePosDef}
Let $1\le r\le m$, let $M_r\in\mathfrak{C}_r$, and let $\det\tilde M_r>0$. 
Then $\det\tilde E_m^*>0$.
\end{corollary}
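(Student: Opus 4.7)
The plan is to extract the conclusion directly from the inductive computation already carried out in the preceding theorem, where an identity of the form $\det E_m^* - (n-3)\det E_{m-1} = \det \tilde E_m^*$ appeared as an intermediate step before being combined with the induction hypothesis. First I would dispose of the base case $m = r$: since every element of $\mathfrak{E}_r$ has top-left $r\times r$ block equal to $M_r$, we simply have $E_r^* = M_r$, hence $\tilde E_r^* = \tilde M_r$, and the conclusion is immediate from the hypothesis $\det \tilde M_r > 0$.

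For $m > r$, my first step is to invoke the theorem just proved to obtain the strict inequality $\det E_m^* > (n-3)\det E_{m-1}^*$. My second step is to let $E_{m-1}$ denote the leading principal $(m-1)\times(m-1)$ submatrix of $E_m^*$; because $E_{m-1}$ still contains $M_r$ in its top-left corner and is positive definite (as a principal minor of the positive definite matrix $E_m^*$), it lies in $\mathfrak{E}_{m-1}$, so $\det E_{m-1} \le \det E_{m-1}^*$ by maximality. My third step is to expand $\det E_m^*$ and $\det \tilde E_m^*$ by cofactors along their respective last rows: the two matrices agree except in the bottom-right entry ($n$ versus $3$), so the expansion collapses to
\[
\det E_m^* - \det \tilde E_m^* \;=\; (n-3)\,\det E_{m-1}.
\]
Combining the three ingredients then yields
\[
\det \tilde E_m^* \;=\; \det E_m^* - (n-3)\det E_{m-1} \;\ge\; \det E_m^* - (n-3)\det E_{m-1}^* \;>\; 0,
\]
which is exactly the desired conclusion.

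There is no substantial obstacle here: the cofactor-expansion identity is the same scalar identity Ehlich used in the unconstrained case, and the strict inequality on the right is precisely the statement of the preceding theorem. Indeed, the authors' remark that the previous proof already ``contains'' the corollary is accurate, because the chain of (in)equalities above is essentially the final line of that induction step, reorganised so that $\det \tilde E_m^* > 0$ appears as the conclusion rather than as a lemma feeding the next stage of the induction.
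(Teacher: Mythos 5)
Your proof is correct and is essentially the argument the paper intends when it says the preceding theorem's proof ``contains'' the corollary: the cofactor identity $\det E_m^*-\det\tilde E_m^*=(n-3)\det E_{m-1}$, the maximality bound $\det E_{m-1}\le\det E_{m-1}^*$, and the strict inequality $\det E_m^*>(n-3)\det E_{m-1}^*$ are exactly the three ingredients used there (with $n>3$ a standing assumption). No further comment is needed.
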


We now generalize Ehlich's Theorem 2.2.  Again we need the assumption that
$\det\tilde M_r>0$.

\begin{theorem}\label{theo:minusOneAndThree}
Let $1\le r\le m$, let $M_r\in\mathfrak{C}_r$, and let $\det\tilde M_r>0$. 
Write
\begin{equation*}
E_m^*=\begin{bmatrix}M_r & B\\B^T & A\end{bmatrix},
\end{equation*}
where $A=(a_{ij})$ and $B=(b_{ij})$ satisfy the conditions in the definition
of $\mathfrak{E}_m$.  Then for $i\ne j$ we have $a_{ij}=-1$ or $3$.
\end{theorem}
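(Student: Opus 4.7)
The plan is to generalise the part of Ehlich's proof of~(\ref{eq:e_bound})~\cite{Ehlich64a} establishing property (1) from the opening of this appendix---that the non-diagonal entries of the unconstrained candidate principal minor of maximal determinant equal $-1$ or $3$. The argument will be a single-entry perturbation. Assume for contradiction that some off-diagonal entry $a_{ij}$ of $A$ (with $i\ne j$) lies outside $\{-1,3\}$. Since $a_{ij}\equiv n\equiv 3\pmod 4$, this forces $a_{ij}\le -5$ or $a_{ij}\ge 7$; it then suffices to exhibit an $E'\in\mathfrak{E}_m$ with $\det E'>\det E_m^*$, contradicting maximality.

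Let $g(x)$ denote the determinant of the matrix obtained from $E_m^*$ by replacing the symmetric pair $a_{ij},a_{ji}$ by $x$, keeping all other entries fixed. A Schur-complement expansion over the $(m-2)\times(m-2)$ principal submatrix $M_{II}$ obtained by deleting the rows and columns containing $a_{ij}$ yields
\begin{equation*}
g(x)=\det(M_{II})\bigl[(n-p_i)(n-p_j)-(x-q)^2\bigr],
\end{equation*}
where $p_i=v_i^T M_{II}^{-1} v_i$, $p_j=v_j^T M_{II}^{-1} v_j$, and $q=v_i^T M_{II}^{-1} v_j$, with $v_i,v_j$ the corresponding columns of $E_m^*$ restricted to the index set of $M_{II}$. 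Since $M_{II}$ is a principal submatrix of the positive definite $E_m^*$, $\det(M_{II})>0$, so $g$ is a downward parabola with apex at $x=q$.

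The next step is Ehlich's exchange: replace $a_{ij}$ by $3$ when $a_{ij}\ge 7$, or by $-1$ when $a_{ij}\le -5$. A direct calculation (for example, $g(3)-g(7)=8\det(M_{II})(5-q)$) expresses the determinant change as a signed multiple of $\det(M_{II})$ times a linear function of $q$. The leading $M_r$ block is undisturbed since the perturbation is confined to $A$, and positive definiteness of the replacement matrix follows from the hypothesis $\det\tilde M_r>0$ and Corollary~\ref{coro:tildePosDef}, via the tilde construction used in the two preceding theorems of this appendix.

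The main obstacle is that a naive local perturbation, comparing $g(a_{ij})$ only with $g(a_{ij}\pm 4)$, yields only $|a_{ij}-q|\le 2$, which is compatible with $a_{ij}=7$ whenever $q\in[5,9]$ and with $a_{ij}=-5$ whenever $q\in[-7,-3]$. Ruling these boundary cases out requires a global bound on $q$. Ehlich's original argument achieves this by coupling the local-optimality conditions at different off-diagonal entries and exploiting the uniform diagonal value $n$; adapting this coupling to the constrained setting, where the leading block $M_r$ must remain fixed throughout the exchange, will be the technical heart of the proof. The hypothesis $\det\tilde M_r>0$ is precisely what guarantees that every intermediate matrix produced by the coupled exchange remains in $\mathfrak{E}_m$.
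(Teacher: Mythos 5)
Your Schur-complement reduction to the parabola $g(x)$ is correct, but the proof as proposed has a genuine gap precisely where you flag it: nothing in your argument bounds $q=v_i^TM_{II}^{-1}v_j$, so the boundary cases ($a_{ij}=7$ with $q\in[5,9]$, or $a_{ij}=-5$ with $q\in[-7,-3]$) are never excluded, and the promised ``coupling of local-optimality conditions'' that would supply a global bound on $q$ is not carried out. Deferring this step is deferring the entire content of the theorem: a single-entry perturbation that keeps the two bordering columns $\alpha,\beta$ fixed cannot by itself decide between $3$ and $7$, and it is not the mechanism Ehlich or the paper uses.

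The paper's proof avoids the issue by making a more drastic exchange. Placing the offending entry $c$ (with $|c|>3$) in position $(m-1,m)$ and ordering so that $\det\begin{bmatrix}E_{m-2}&\alpha\\\alpha^T&n\end{bmatrix}\le\det\begin{bmatrix}E_{m-2}&\beta\\\beta^T&n\end{bmatrix}$, it compares $E_m^*$ not with a matrix in which only $a_{ij}$ is changed, but with the matrix $E_m$ in which the column $\alpha$ is \emph{replaced by a duplicate of} $\beta$ and the off-diagonal entry is set to $3$. Splitting the last diagonal entry as $n=(n-3)+3$ gives $\det E_m^*=(n-3)\det\begin{bmatrix}E_{m-2}&\alpha\\\alpha^T&n\end{bmatrix}+\det\tilde E_m^*$ and $\det E_m=(n-3)\det\begin{bmatrix}E_{m-2}&\beta\\\beta^T&n\end{bmatrix}+\det\tilde E_m$, with $\det\tilde E_m=(n-3)\det\begin{bmatrix}E_{m-2}&\beta\\\beta^T&3\end{bmatrix}$. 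The hypothesis $\det\tilde M_r>0$ enters through Corollary~\ref{coro:tildePosDef}, which makes $\tilde E_m^*$ positive definite; after a symmetric row/column operation clearing $c$ against the diagonal $3$, a Fischer-type inequality gives $\det\tilde E_m^*\le\left(n-\tfrac{c^2}{3}\right)\det\begin{bmatrix}E_{m-2}&\beta\\\beta^T&3\end{bmatrix}$, and $|c|>3$ forces $n-\tfrac{c^2}{3}<n-3$, hence $\det E_m^*<\det E_m$, contradicting maximality. Note that this is also a different role for the hypothesis $\det\tilde M_r>0$ than the one you assign it (keeping intermediate matrices in $\mathfrak{E}_m$): it is needed to legitimise the Fischer bound on the tilde matrix. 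If you want to salvage your single-entry framework, you would in effect have to prove the bound on $q$ that the column-duplication argument renders unnecessary.
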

\begin{proof}
When $m=r$ or $r+1$ the statement is vacuously true.  So we assume $m\ge r+2$.  Suppose there is an element $a_{ij}=c\ne-1$ or $3$ for $i\ne j$.  Then $|c|>3$.  We may assume that the element is positioned so that $i=m-1$, $j=m$, so that we may write
\begin{equation*}
E_m^*=\begin{bmatrix}E_{m-2} & \alpha & \beta\\\alpha^T & n & c\\\beta^T & c
& n\end{bmatrix}.
\end{equation*}
By interchanging the last two rows and last two columns, if necessary, we
may assume that
\begin{equation*}
\det\begin{bmatrix}E_{m-2} & \alpha\\\alpha^T & 
n\end{bmatrix}\le\det\begin{bmatrix}E_{m-2} & 
\beta\\\beta^T & n\end{bmatrix}.
\end{equation*}
We now claim that the matrix
\begin{equation*}
E_m=\begin{bmatrix}E_{m-2} & \beta & \beta\\\beta^T & n & 3\\\beta^T & 3 &
n\end{bmatrix}
\end{equation*}
has larger determinant than $E_m^*$, a contradiction.  To establish the
claim, evaluate both determinants:
\begin{align*}
\det E_m^*&= (n-3)\det\begin{bmatrix}E_{m-2} & 
\alpha\\\alpha^T & n\end{bmatrix}+\det\tilde E_m^*\\
\det E_m&=(n-3)\det\begin{bmatrix}E_{m-2} & 
\beta\\\beta^T & n\end{bmatrix}+\det\tilde E_m
\end{align*}
By Corollary~\ref{coro:tildePosDef}, $\tilde E_m^*$ is positive definite. 
Symmetric row and column operations do not affect positive definiteness, so
we get
\renewcommand{\arraystretch}{1.2}
\begin{align*}
\det\tilde E_m^*&=\det\begin{bmatrix}E_{m-2} & \alpha & \beta\\\alpha^T & n & c\\\beta^T & c & 3\end{bmatrix}=\det\begin{bmatrix}E_{m-2} & \beta & \alpha-\frac{c}{3}\beta\\
\beta^T & 3 & 0\\
\alpha^T-\frac{c}{3}\beta^T & 0 & n-\frac{c^2}{3}\end{bmatrix}\\
&\le \left(n-\frac{c^2}{3}\right)\det\begin{bmatrix}E_{m-2} & 
\beta\\\beta^T & 3\end{bmatrix}.
\end{align*}
\renewcommand{\arraystretch}{1}
We evaluate $\det\tilde E_m$ by subtracting column $m$ from column $m-1$ and
doing expansion by minors on column $m-1$ to obtain
\begin{equation*}
\det\tilde E_m=(n-3)\det\begin{bmatrix}E_{m-2} & \beta\\\beta^T & 
3\end{bmatrix}.
\end{equation*}
Since $|c|>3$ we have $\det\tilde E_m^*<\det\tilde E_m$ and therefore $\det
E_m^*<\det E_m$.
\end{proof}

Now we want to generalize Ehlich's Theorem 2.3.  First a useful lemma about
block matrices.  (See Ehlich's paper for the formal definition of block.)
\begin{lemma}\label{lem:blockProperty}
A symmetric matrix $A=(a_{ij})$ with diagonal elements $n$ is a block matrix
if and only if its non-diagonal elements are all $-1$ or $3$ and for any
$i\ne j$ such that $a_{ij}=3$ the columns $i$ and $j$ differ only in their
$i^\text{th}$ and $j^\text{th}$ elements.
\end{lemma}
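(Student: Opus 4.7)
The plan is to prove the two directions of this iff-statement separately, with the reverse direction (the characterization) being where almost all the real work lies. The forward direction is essentially a tautology: if $A$ is a block matrix in Ehlich's sense, all non-diagonal entries are in $\{-1,3\}$ by definition, and if $a_{ij}=3$ then $i$ and $j$ lie in the same diagonal block, so for any third index $k$ the entries $a_{ik}$ and $a_{jk}$ are both $3$ (when $k$ is in the same block) or both $-1$ (when $k$ lies in a different block); hence columns $i$ and $j$ can only differ in positions $i$ and $j$ themselves. I would dispose of this in two or three sentences.

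For the reverse direction, I would introduce the relation $i\sim j$ on indices defined by $i = j$ or $a_{ij}=3$. Reflexivity is built in, and symmetry follows from $A=A^T$. The central step is transitivity: if $a_{ij}=3$ and $a_{jk}=3$ with $i,j,k$ distinct, one applies the hypothesis to the pair $(i,j)$ to conclude that columns $i$ and $j$ agree at every row other than $i,j$; in particular, row $k$ gives $a_{ki}=a_{kj}$, and since $a_{kj}=a_{jk}=3$ by symmetry, we obtain $a_{ki}=3$, i.e.\ $i\sim k$. Thus $\sim$ is an equivalence relation, and its equivalence classes partition $\{1,\ldots,m\}$ into blocks. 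Within a single class all off-diagonal entries equal $3$ by definition of $\sim$; between two distinct classes all entries equal $-1$ because they lie in $\{-1,3\}$ and cannot equal $3$ without collapsing the two classes. After a simultaneous permutation of rows and columns arranging the equivalence classes to occupy contiguous index intervals (which preserves the block-matrix property), one obtains exactly the block form Ehlich defines, with off-diagonal $3$s appearing only within square diagonal blocks and $-1$s everywhere else off the diagonal.

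The principal obstacle, insofar as there is one, is the transitivity step, and specifically the need to use the column-agreement hypothesis rather than just the presence of $3$ entries. It is worth recording explicitly that the hypothesis is invoked with the ordered pair $(i,j)$: one gets column equality at row $k$, converts this into $a_{ki}=a_{kj}$, and then uses symmetry of $A$ to transfer the value $3$ from $a_{jk}$ to $a_{ki}$. Once transitivity is in hand the rest is bookkeeping: verifying that each equivalence class supplies a filled-with-$3$s off-diagonal block, that cross-class entries are forced to equal $-1$, and that reordering indices by class yields Ehlich's block form. No new ingredients beyond the definition of $\sim$ and the symmetry of $A$ are required.
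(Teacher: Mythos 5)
Your proposal is correct and follows essentially the same route as the paper: the paper's proof fixes a $3$ at position $(i,j)$, takes the set of all indices $h$ with $a_{hj}=3$ together with $j$, and uses the column-agreement hypothesis exactly as in your transitivity step to show all pairwise entries within that set are $3$ and all entries to outside indices are $-1$; your equivalence-relation packaging is just a tidier way of saying the same thing. No gaps.
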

\begin{proof}
If $A$ is a block matrix, the statement is clearly true.  For the converse,
let $(i,j)$ be the position of one of the 3s of $A$.  Define $i_1=j$,
$i_2=i$, and let $\{i_2,i_3,\ldots,i_p\}$ be the set of all indices $h$ for
which $a_{hj}=3$.  Let $2\le k\le p$.  Then $a_{i_kj}=3$ means that the
$i_k^\text{th}$ element of column $j$ is 3. Let $1\le\ell\le p$, $\ell\ne
k$.  Then since columns $i_\ell$ and $j$ agree in their $i_k^\text{th}$
element we have $a_{i_ki_\ell}=3$ for all $k\ne\ell$.

For an index $h\notin\{i_1,\ldots,i_p\}$ we have $a_{hj}=-1$.  But since
columns $j$ and $i_\ell$, $1\le\ell\le p$, agree in their $h^\text{th}$
element, we have $a_{hi_\ell}=-1$ for all $1\le\ell\le p$.  Therefore the
set of indices $\{i_1,\ldots,i_p\}$ forms a block.  Hence every one of the
3s in $A$ lies in a block, and $A$ is a block matrix.
\end{proof}

Now for the generalization of Ehlich's Theorem 2.3.
\begin{theorem}\label{theo:duplCols}
Let $1\le r\le m$, let $M_r\in\mathfrak{C}_r$, and let $\det\tilde M_r>0$.  Write
\begin{equation*}
E_m^*=\begin{bmatrix}M_r & B\\B^T & A\end{bmatrix},
\end{equation*}
where $A=(a_{ij})$ and $B=(b_{ij})$ satisfy the conditions in the definition
of $\mathfrak{E}_m$.  If for some $i\ne j$, $a_{ij}=3$, then columns $i$ and
$j$ of $B$ are equal and columns $i$ and $j$ of $A$ are equal except for
their $i^\text{th}$ and $j^\text{th}$ elements.
\end{theorem}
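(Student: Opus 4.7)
The plan is to mirror the proof of Theorem~\ref{theo:minusOneAndThree}, replacing the element $c$ with the specific value $3$ and tracking both block equalities that arise. Using the freedom to permute rows/columns $r+1,\dots,m$ (which leaves $M_r$, and hence the maximality of $\det E_m^*$ in $\mathfrak{E}_m$, unchanged), I would assume without loss of generality that $i=m-1$ and $j=m$. I would then write
\begin{equation*}
E_m^*=\begin{bmatrix}E_{m-2}&\alpha&\beta\\ \alpha^T&n&3\\ \beta^T&3&n\end{bmatrix},\qquad
E_m=\begin{bmatrix}E_{m-2}&\beta&\beta\\ \beta^T&n&3\\ \beta^T&3&n\end{bmatrix},
\end{equation*}
and swap the last two rows/columns in $E_m^*$ if necessary so that $\det\bigl[\begin{smallmatrix}E_{m-2}&\alpha\\ \alpha^T&n\end{smallmatrix}\bigr]\le\det\bigl[\begin{smallmatrix}E_{m-2}&\beta\\ \beta^T&n\end{smallmatrix}\bigr]$. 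The goal is to show $\det E_m\ge\det E_m^*$, with equality forcing $\alpha=\beta$; since $E_m^*$ is maximal, this will yield $\alpha=\beta$, which is exactly the column-equality claim restricted to rows $1,\ldots,m-2$ (the first $r$ of which lie in $B$ and the remainder in $A$).

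The calculational core is to evaluate both determinants by the identity $\det\bigl[\begin{smallmatrix}X&y\\ y^T&n\end{smallmatrix}\bigr]=(n-3)\det X+\det\bigl[\begin{smallmatrix}X&y\\ y^T&3\end{smallmatrix}\bigr]$, applied with the last diagonal entry. For $E_m$, one additional column subtraction (column $m-1$ from column $m$ inside $\tilde E_m$) cleanly gives
\begin{equation*}
\det E_m=(n-3)\det\begin{bmatrix}E_{m-2}&\beta\\ \beta^T&n\end{bmatrix}+(n-3)\det\begin{bmatrix}E_{m-2}&\beta\\ \beta^T&3\end{bmatrix}.
\end{equation*}
For $E_m^*$ one obtains $\det E_m^*=(n-3)\det\bigl[\begin{smallmatrix}E_{m-2}&\alpha\\ \alpha^T&n\end{smallmatrix}\bigr]+\det\tilde E_m^*$, so the inequality $\det E_m\ge\det E_m^*$ will follow from $\det\tilde E_m^*\le(n-3)\det P$ with $P=\bigl[\begin{smallmatrix}E_{m-2}&\beta\\ \beta^T&3\end{smallmatrix}\bigr]$, combined with the WLOG ordering on the two $(m-1)\times(m-1)$ blocks.

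The inequality on $\det\tilde E_m^*$ is exactly the step of the proof of Theorem~\ref{theo:minusOneAndThree} specialized to $c=3$: symmetric row/column operations (adding $-\beta$ times row $m-1$ to row $m$, and similarly for columns) yield
\begin{equation*}
\det\tilde E_m^*=\det\begin{bmatrix}E_{m-2}&\beta&\alpha-\beta\\ \beta^T&3&0\\ (\alpha-\beta)^T&0&n-3\end{bmatrix},
\end{equation*}
and Schur-complementing out the last row/column gives $\det\tilde E_m^*=(n-3)\det P-(\alpha-\beta)^T(\text{Schur stuff})(\alpha-\beta)$. Here I would invoke Corollary~\ref{coro:tildePosDef} to conclude $\det\tilde E_m^*>0$, hence (by a continuity/leading-principal-minor argument using that $E_m^*$ is positive definite and only one diagonal entry varies linearly) that $\tilde E_m^*$ itself is positive definite, and therefore $P$ is positive definite. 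The Schur-complement correction is then nonnegative and vanishes iff $\alpha=\beta$, giving the desired inequality and equality condition.

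Assembling: the chain $\det E_m\ge(n-3)\det\bigl[\begin{smallmatrix}E_{m-2}&\alpha\\ \alpha^T&n\end{smallmatrix}\bigr]+(n-3)\det P\ge\det E_m^*$ combined with maximality of $\det E_m^*$ forces $\alpha=\beta$; reading off rows $1,\ldots,r$ gives equality of the two columns of $B$ and reading off rows $r+1,\ldots,m-2$ gives the claimed equality of the two columns of $A$ away from positions $i,j$. The main obstacle I expect is the positive-definiteness step for $\tilde E_m^*$: Corollary~\ref{coro:tildePosDef} yields only positivity of the determinant, so I would need the continuity argument (interpolating the last diagonal entry linearly between $3$ and $n$, observing that each leading principal minor is either constant or linear and that the endpoints are positive) to upgrade this to genuine positive definiteness, which is what legitimizes the Schur-complement inequality.
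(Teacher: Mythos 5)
Your proposal is correct and follows essentially the same route as the paper: reduce WLOG to positions $(m-1,m)$, compare $E_m^*$ with the repeated-column competitor $E_m$ via the split $\det = (n-3)\det(\cdot) + \det(\widetilde{\,\cdot\,})$, and use the positive definiteness of $\tilde E_m^*$ (from Corollary~\ref{coro:tildePosDef} together with the leading-principal-minor observation) to bound $\det\tilde E_m^*$ by $(n-3)\det P$ with equality iff $\alpha=\beta$. Your Schur-complement phrasing of that last bound is just the $c=3$ specialization of the computation in Theorem~\ref{theo:minusOneAndThree}, which is exactly what the paper's proof invokes.
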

\begin{proof}
By Theorem~\ref{theo:minusOneAndThree} we know that the non-diagonal
elements of $A$ are $-1$ or $3$.  As before, the theorem is vacuously true
if $m=r$ or $m=r+1$.  Assume $m\ge r+2$ and let $A$ have an element 3, say
in position $(m-1,m)$.  Write
\begin{equation*}
E_m^*=\begin{bmatrix}M_r & B_1 & \beta_{m-1} & \beta_m\\ B_1^T & A_1 &
\alpha_{m-1} & \alpha_m\\ \beta_{m-1}^T & \alpha_{m-1}^T & n & 3\\\beta_m^T
& \alpha_m^T & 3 & n\end{bmatrix}.
\end{equation*}
We assume, as we may (by swapping the last two rows and last two columns if
necessary), that
\begin{equation*}
\det\begin{bmatrix}M_r & B_1 & \beta_{m-1}\\ B_1^T & A_1 & \alpha_{m-1}\\ \beta_{m-1}^T & \alpha_{m-1}^T & n\end{bmatrix}\le\det\begin{bmatrix}M_r & B_1 & \beta_m\\ B_1^T & A_1 & \alpha_m\\ \beta_m^T & \alpha_m^T & n\end{bmatrix}.
\end{equation*}
Our goal is now to show that $\beta_{m-1}=\beta_m$ and
$\alpha_{m-1}=\alpha_m$. Suppose that this is not the case.  We claim that
$\det E_m>\det E_m^*$ where
\begin{equation*}
E_m=\begin{bmatrix}M_r & B_1 & \beta_m & \beta_m\\ B_1^T & A_1 & \alpha_m &
\alpha_m\\ \beta_m^T & \alpha_m^T & n & 3\\\beta_m^T & \alpha_m^T & 3 &
n\end{bmatrix}.
\end{equation*}
Write
\begin{equation*}
\det E_m^*=(n-3)\det\begin{bmatrix}M_r & 
B_1 & \beta_{m-1}\\ B_1^T & A_1 & \alpha_{m-1}\\ \beta_{m-1}^T & 
\alpha_{m-1}^T & n\end{bmatrix}+\det\tilde E_m^*
\end{equation*}
and
\begin{equation*}
\det E_m=(n-3)\det\begin{bmatrix}M_r & B_1 & \beta_m\\ B_1^T & A_1 & 
\alpha_m\\ \beta_m^T & \alpha_m^T & n\end{bmatrix}+\det\tilde E_m.
\end{equation*}
The first term on the right in $\det E_m^*$ is no larger than the first term
on the right in $\det E_m$, and we will see that the second term of $\det
E_m^*$ is strictly smaller than the second term of $\det E_m$.  By
subtracting row and column $m$ of $\tilde E_m$ from row and column $m-1$ of
$\tilde E_m$ and expanding by minors on column $m-1$ we find that
\begin{equation*}
\det\tilde E_m=(n-3)\det\begin{bmatrix}M_r & B_1 & \beta_m\\ B_1^T & 
A_1 & \alpha_m\\ \beta_m^T & \alpha_m^T & 3\end{bmatrix}.
\end{equation*}
On the other hand, using Corollary~\ref{coro:tildePosDef}, which implies
that $\tilde E_m^*$ is positive definite, and evaluating the determinant as
we did for $\det\tilde E_m$, we find that
\begin{equation*}
\det\tilde E_m^*<(n-3)\det\begin{bmatrix}M_r & B_1 & \beta_m\\ B_1^T & 
A_1 & \alpha_m\\ \beta_m^T & \alpha_m^T & 3\end{bmatrix}.
\end{equation*}
This follows because $\alpha_{m-1}-\alpha_m$ and $\beta_{m-1}-\beta_m$,
which together form the first $m-2$ elements of column $m-1$ in the
expansion by minors, are not both zero.
\end{proof}

\begin{corollary}
The matrix $A$ in Theorem~\ref{theo:duplCols} is a block matrix.
\end{corollary}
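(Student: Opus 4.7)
The plan is to derive the corollary as an almost immediate consequence of the preceding two theorems together with Lemma~\ref{lem:blockProperty}. The lemma gives a clean characterization of block matrices: a symmetric matrix with diagonal entries $n$ is a block matrix if and only if (a) every off-diagonal entry is $-1$ or $3$, and (b) whenever $a_{ij}=3$ with $i\ne j$, columns $i$ and $j$ agree outside their $i^\text{th}$ and $j^\text{th}$ rows. So the task reduces to verifying both hypotheses of the lemma for the matrix $A$ coming from $E_m^*$.

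First I would invoke Theorem~\ref{theo:minusOneAndThree}, which, under the standing assumption $\det\tilde M_r>0$, tells us directly that every off-diagonal entry of $A$ is either $-1$ or $3$. This dispenses with condition (a). Next I would apply Theorem~\ref{theo:duplCols}: for any pair $i\ne j$ with $a_{ij}=3$, that theorem asserts not only that columns $i$ and $j$ of $B$ coincide, but also that columns $i$ and $j$ of $A$ coincide outside rows $i$ and $j$. The part about $A$ is exactly condition (b) of Lemma~\ref{lem:blockProperty}.

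Both hypotheses of the lemma are therefore met, and the lemma delivers the conclusion that $A$ is a block matrix, completing the proof. I expect no real obstacle: the substantive work has already been done in Theorems~\ref{theo:minusOneAndThree} and~\ref{theo:duplCols}, and Lemma~\ref{lem:blockProperty} is formulated precisely so that this corollary is a one-line consequence. The only thing worth being explicit about is that the statement of Theorem~\ref{theo:duplCols} about $A$ is logically stronger than what the lemma needs (it specifies that the differing entries are exactly those in rows $i,j$, which is what condition (b) demands), so no additional verification is required.
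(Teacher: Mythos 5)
Your proposal is correct and is essentially identical to the paper's own proof: both verify the two conditions of Lemma~\ref{lem:blockProperty} by citing Theorem~\ref{theo:minusOneAndThree} for the off-diagonal values and Theorem~\ref{theo:duplCols} for the column-agreement property.
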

\begin{proof}
We have proved in Theorems~\ref{theo:minusOneAndThree}
and~\ref{theo:duplCols} that both of the conditions needed in
Lemma~\ref{lem:blockProperty} for $A$ to be a block matrix hold.
\end{proof}

Our conclusion is that, when $\det\tilde M_r>0$, the maximal determinant
completion $E_m^*$ of $M_r$ takes a form where $A$ is a block matrix with
some number $k$ of blocks whose sizes we will denote $b_1$, $b_2$, \ldots,
$b_k$, and where $B=\begin{bmatrix}B_1 & \ldots & B_k\end{bmatrix}$ with
each of the matrices $B_j$ a rank-1 matrix consisting of a column
$\beta_j^*$ repeated $b_j$ times.  This
establishes~(\ref{eqn:evensharperbound}).

\section*{Acknowledgements}
We gratefully acknowledge support 
from the Australian Research Council, 
the ARC Centre of Excellence for Mathematics and Statistics of
Complex Systems (MASCOS), and 
INRIA via the ANU-INRIA Associate Team ANC.


\begin{thebibliography}{99}

\bibitem{Bar33}
G. Barba,
Intorno al teorema di Hadamard sui determinanti a valore massimo
\emph{Giorn. Mat. Battaglini} \textbf{71} (1933), 70--86.

\bibitem{www37}
R. P. Brent, W. P. Orrick, J. H. Osborn and P. Zimmermann, 
Some D-optimal designs of orders 19 and 37,
\url{http://wwwmaths.anu.edu.au/~brent/maxdet/}

\bibitem{BrRy49}
R. H. Bruck and H. J. Ryser,
The nonexistence of certain finite projective planes
\emph{Canadian J. Math.} \textbf{1} (1949) 88--93. 

\bibitem{ChKoMo87}
T. Chadjipantelis, S. Kounias and C. Moyssiadis,
The maximum determinant of $21 \times 21$ $(+1,-1)$-matrices and
D-optimal designs,
\emph{J. Statist. Plann. Inference} \textbf{16}, 2 (1987), 167--178.

\bibitem{Charalambides05}
A. Charalambides, 
personal communication to W. Orrick, 2005.   

\bibitem{Cohn00}
J. H. E. Cohn,
Almost D-optimal designs,
\emph{Utilitas Math.} \textbf{57} (2000), 121--128.

\bibitem{ConSlo98}
N. J. A. Sloane and J. H. Conway, \emph{Sphere Packings, Lattices and Groups, volume 290 of Grundlehren der Mathematischen Wissenschaften}, Springer, New York--Berlin--Heidelberg, 3rd edition, 1998. 

\bibitem{Craigen90}
R. Craigen,
The range of the determinant function on the set of $n \times n$
$(0,1)$-matrices, 
\emph{J. Combin. Math. Combin. Comput.} \textbf{8} (1990), 161--171.

\bibitem{Ehlich64a}
H. Ehlich,	
Determinantenabsch\"atzungen f\"ur bin\"are {M}atrizen,
\emph{Math. Z.} 	
\textbf{83} (1964), 123--132.

\bibitem{Ehlich64b}
H. Ehlich,	
Determinantenabsch\"atzungen f\"ur bin\"are Matrizen 
mit $n \equiv 3 \bmod 4$,
\emph{Math. Z.} 	
\textbf{84} (1964), 438--447.

\bibitem{GeSe79}   
A. V. Geramita and J. Seberry,
\emph{Orthogonal Designs: Quadratic Forms and Hadamard Matrices},
Marcel Dekker, New York, 1979.

\bibitem{Hadamard93}
J. Hadamard,
R\'esolution d'une question relative aux d\'eterminants,
\emph{Bull. des Sci. Math.} \textbf{17} (1893), 240--246.

\bibitem{Horadam07}
K. J. Horadam,	
\emph{Hadamard Matrices and their Applications},
Princeton University Press, 2007.

\bibitem{McKay79}
B. D. McKay,
Hadamard equivalence via graph isomorphism,
\emph{Discrete Mathematics} \textbf{27} (1979), 213--214.

\bibitem{nauty}
B. D. McKay,  
\emph{nauty}, \url{http://cs.anu.edu.au/~bdm/nauty/}.

\bibitem{Metropolis69}
N. Metropolis, Spectra of determinant values in $(0, 1)$ matrices. 
In A. O. L. Atkin and B. J. Birch, editors, 
\emph{Computers in Number Theory: Proceedings of
the Science Research Atlas Symposium No.~2 held at Oxford, 18--23
August, 1969}, Academic Press, London, 1971, 271--276.

\bibitem{MoKo82}
C. Moyssiadis and S. Kounias, 
The exact D-optimal first order saturated design with $17$ observations,
\emph{J. Statist. Plann. Inference} \textbf{7} (1982), 13--27.

\bibitem{Orrick05} 
W. P. Orrick, 
The maximal {$\{-1,1\}$}-determinant of order $15$,
\emph{Metrika} \textbf{62} (2005), 195--219.

\bibitem{Orrick08a} 
W. P. Orrick, 
Switching operations for {H}adamard matrices,
\emph{SIAM J.\ Discrete Math.} \textbf{22} (2008), 31--50.

\bibitem{Orrick-spectrum-talk}
W. P. Orrick,
Range and distribution of determinants of binary matrices,
invited talk
presented at the \emph{Maximal Determinant Workshop} held at
the Australian National University, 17 May 2010.
Available from 
\url{http://mypage.iu.edu/~worrick/talks.html}. 

\bibitem{OrSo07}
W. P. Orrick and B. Solomon,
Large-determinant sign matrices of order $4k+1$, 
\emph{Discrete Math.} \textbf{307} (2007), 226--236.

\bibitem{maxdet}
W. P. Orrick and B. Solomon,
\emph{The Hadamard maximal determinant problem},
\url{http://www.indiana.edu/~maxdet/}.

\bibitem{spectrum}
W. P. Orrick and B. Solomon,
\emph{Spectrum of the determinant function},
\url{http://www.indiana.edu/~maxdet/spectrum.html}.

\bibitem{OS-19-R3}
W. P. Orrick and B. Solomon, 
\emph{Conjectured $19 \times 19\ \{-1, +1\}$ matrices of maximal determinant},
15 May 2003. 
\url{http://www.indiana.edu/~maxdet/d19.html}.

\bibitem{Osborn02}
J. H. Osborn,		
\emph{The Hadamard Maximal Determinant Problem},
{Honours thesis}, University of Melbourne, 2002.
Available from
\url{http://wwwmaths.anu.edu.au/~osborn/publications/pubsall.html}.

\bibitem{Smith88}
W. D. Smith,	 	
\emph{Studies in Computational Geometry Motivated by Mesh Generation},
{PhD dissertation}, Princeton University, 1988.

\bibitem{Tamura06}
H. Tamura,    
D-optimal designs and group divisible designs, \emph{J. Combin. Des.} \textbf{14} (2006), 451--462.


\bibitem{Zivkovic06}
M. \v{Z}ivkovi\'{c}, 	
Classification of small $(0,1)$ matrices, 
\emph{Linear Algebra Appl.} \textbf{414} (2006), 1, 310--346. 

\end{thebibliography}
\end{document}